\title{Positively curved Finsler metrics on vector bundles III \thanks{Mathematics Subject Classification 32J25, 32F17. \newline Keywords: Kobayashi positivity, Griffiths positivity, ampleness.}}
\author{Kuang-Ru Wu}
\newcommand{\RN}[1]{%
  \textup{\uppercase\expandafter{\romannumeral#1}}%
}
\theoremstyle{plain}
\numberwithin{equation}{section}
\begin{document}

\date{}

\parskip=6pt

\maketitle
\begin{abstract}
The goal of the paper is to extend results about ample or Griffiths positive vector bundles to Kobayashi positive vector bundles. In particular, we show that the quotient bundle of a Kobayashi positive vector bundle is Kobayashi positive, and the tensor product of two Kobayashi positive vector bundles is Kobayashi positive. 

These results strengthen the conjectural equivalences between ampleness, Griffiths positivity, and Kobayashi positivity. The proofs rely on the convexity of Kobayashi positive Finsler metrics and the duality for convex Finsler metrics. 
\end{abstract}

\section{Introduction}
In \cite{Hart66}, Hartshorne proved several properties regarding ampleness of vector bundles; for example, taking quotient and taking tensor product preserve ampleness. Most of these properties are also valid for Griffiths positive Hermitian metrics (\cite{Griff69,Umemura,demailly1997complex}). In light of the conjectures of Griffiths and Kobayashi which predict that ampleness, Griffiths positivity, and Kobayashi positivity should all be equivalent, one is led to the question whether these properties are valid for Kobayashi positive Finsler metrics. The main purpose of the paper is to address this question. (For the progress on the conjectures of Griffiths and Kobayashi, see \cite{Umemura,CampanaFlenner,Berndtsson09,MourouganeTaka,toric,positivityandvanishingthmliu,naumann2017approach,FengLiuWan,demailly2020hermitianyangmills,pingali2021note,wu_2022,wupositivelyII,Mazhang}).

We use $E, E_1$, and $E_2$ for holomorphic vector bundles and $X, Y$ for the base compact complex manifolds. A vector bundle is called Kobayashi positive if it carries a strongly pseudoconvex Finsler metric with positive Kobayashi curvature (we will review Finsler metrics in Subsection \ref{subsec finsler}). Our main results are the following.
\begin{theorem}\label{thm 1}
\
\begin{enumerate}
    \item\label{1} The quotient bundle of a Kobayashi positive vector bundle is Kobayashi positive.    
    \item If $E_1$ and $E_2$ are Kobayashi positive, then $E_1\otimes E_2$ is Kobayashi positive.
    \item If $E$ is Kobayashi positive, then the tensor power $E^{\otimes k}$ and the symmetric power $S^kE$ are Kobayashi positive for $k\geq 1$, and the exterior power $\bigwedge^k E$ is Kobayashi positive for $1\leq k \leq r$ where $ r=\text{rank } E  $.
\end{enumerate}

\end{theorem}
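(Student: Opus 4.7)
The plan is to mirror the classical Hermitian strategy via Finsler duality. The key observation is that restriction to a subbundle is manifestly compatible with the \emph{negative} side of the theory, so quotient statements for positive metrics on $E$ reduce to restriction statements for a dual Finsler metric on $E^*$; the same principle drives the tensor product construction. The first ingredient is that any Kobayashi positive Finsler metric on $E$ can be taken to be fiberwise strictly convex, after which the fiberwise Legendre transform from the convex Finsler theory produces a dual metric $h^*$ on $E^*$, and Kobayashi positivity of $h$ translates into a plurisubharmonicity statement for a suitable dual weight (essentially $-\log h^*$) on $E^*\setminus\{0\}$. Double dualization recovers $h$.

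For part (1), the quotient $E\to Q$ dualizes to an injection $Q^*\hookrightarrow E^*$. Given a convex Kobayashi positive $h$ on $E$, I form $h^*$ on $E^*$, restrict it to $Q^*$, and dualize again to obtain a Finsler metric $h_Q$ on $Q$. Restricting a plurisubharmonic function to a complex submanifold preserves plurisubharmonicity, so $h^*|_{Q^*}$ retains the dual negativity, and the second Legendre transform delivers the desired Kobayashi positivity on $Q$.

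For part (2), I would construct the tensor product metric on the dual side: on $E_1^*\otimes E_2^*$ take the projective tensor norm whose unit ball is the closed convex hull of $\{\xi_1\otimes\xi_2 : h_1^*(\xi_1)\leq 1,\ h_2^*(\xi_2)\leq 1\}$, and dualize to obtain a metric on $E_1\otimes E_2$. Positivity of the Kobayashi curvature follows from positivity of each factor after reformulation in the dual picture. Part (3) is then a consequence of the first two: $E^{\otimes k}$ by iterating (2), and $S^kE$ and $\bigwedge^k E$ as quotients of $E^{\otimes k}$ via (1).

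The main obstacle is the curvature transformation under Finsler Legendre duality. Unlike the Hermitian case, where dualization is a matrix inverse, the Finsler Legendre transform is implicit and the Kobayashi curvature is defined by the Levi form of the unit sphere. Translating Kobayashi positivity on $E$ cleanly into plurisubharmonicity on $E^*$ — quantitatively enough to survive restriction and the second Legendre transform — is the crux of part (1). For the tensor product, the projective norm is defined by an infimum and may fail to be smooth, so establishing strong pseudoconvexity and strict positivity of the Kobayashi curvature of the final metric will probably require the smoothing and convex-duality techniques from the earlier papers in the series.
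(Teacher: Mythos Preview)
Your plan for parts (1) and (3) is essentially the paper's argument: pass to the dual using the convexity result (Lemma \ref{convex}), observe that restriction of a convex Kobayashi negative metric to a subbundle stays convex Kobayashi negative (Lemma \ref{subbundle}), and dualize back via the Sommese--Demailly duality (Lemma \ref{sommese}). The regularity issues you anticipate are handled in the paper by the regularization lemmas for continuous Finsler metrics.

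For part (2), however, there is a genuine gap. You propose the \emph{projective} tensor norm on $E_1^*\otimes E_2^*$, but that norm is an infimum over decompositions $u=\sum \xi_i\otimes\eta_i$, and infima of plurisubharmonic functions are not plurisubharmonic; no amount of smoothing will repair the curvature sign. The paper instead works with the \emph{operator norm}
\[
F_3(z,A)=\sup_{0\neq\zeta\in E_{1,z}}\frac{F_2(z,A\zeta)}{F_1(z,\zeta)}
\]
on $\Hom(E_1,E_2^*)\simeq E_1^*\otimes E_2^*$, a supremum rather than an infimum. Even then the argument is not automatic, because the maximizing $\zeta$ moves with the base point. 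The key mechanism, which your proposal does not contain, is a Lempert-type touching argument: fix a holomorphic section $A(z)$ and a point $z_0$; the sup in $F_3(A(z_0))$ is attained at some $\zeta_0$. Because $F_1$ is Kobayashi \emph{positive}, Lemma \ref{lem inf} supplies a holomorphic section $\psi$ through $\zeta_0$ with $\partial\bar\partial\log F_1^2(\psi)\big|_{z_0}<0$ along the chosen complex line. Since $F_2$ is Kobayashi negative, $\log F_2^2(A\psi)$ is subharmonic, so the competitor $\log F_2^2(A\psi)-\log F_1^2(\psi)$ satisfies the sub-mean-value inequality at $z_0$; it touches $\log F_3^2(A)$ from below with equality at $z_0$, forcing $\log F_3^2(A)$ itself to be subharmonic. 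This, plus a small Hermitian perturbation and the regularization Lemma \ref{for sheaf S'}, yields a genuine convex Kobayashi negative Finsler metric on $E_1^*\otimes E_2^*$, and one dualizes. The interplay between the positivity of $F_1$ (to push the denominator down) and the negativity of $F_2$ (to make the numerator psh) is the crux; your sentence ``positivity of the Kobayashi curvature follows from positivity of each factor after reformulation in the dual picture'' glosses over exactly this point.
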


The second statement in Theorem \ref{thm 1} is perhaps most interesting. It is based on a general theorem in \cite[Theorem 2.4]{LLmax} (similar results in various settings can be found in \cite[Theorem 4.2]{rochberg1984}, \cite[Corollary 4.5]{SlodI}, and \cite[Proposition 15.5]{CofimanSemmes} where the focus is on the interpolation problem). This theorem, roughly speaking, says that if a holomorphic homomorphism between Banach bundles decreases curvature, then the logarithm of the operator norm of the homomorphism is plurisubharmonic. 

Since every homomorphism from $E_1$ to $E_2^*$ decreases curvature provided that $E_1$ is positively curved and $E_2^*$ is negatively curved, the bundle $\Hom(E_1,E_2^*)$ with the operator norm is negatively curved according to the Lempert's theorem above, so $E_1^*\otimes E_2^*$ is negatively curved. This is basically the idea behind the proof, but we cannot simply apply the results in \cite{rochberg1984,SlodI,CofimanSemmes,LLmax} to our case. One main catch is that our predecessors all assume convexity on the metrics, whereas the Finsler metrics we consider are not necessarily convex.
Another obstacle concerns dual Finsler metrics. In general, taking duality of Finsler metrics does not flip the sign of curvature (\cite[Remark 2.7]{DemaillyMSRI}); however, for convex Finsler metrics, we do have the duality (\cite{Sommese,DemaillyMSRI} or see Lemma \ref{sommese}. In \cite[Lemma 2.1]{LLextrapolation}, Lempert proved duality for Banach bundles). All these issues can be resolved by our result in \cite[Corollary 2]{wu_2022}: when it comes to Kobayashi positive Finsler metrics, one has convexity for free. The use of our convexity result from \cite[Corollary 2]{wu_2022} is the crux of the paper.

Another feature which does not seem to appear in the literature before is regularization of continuous Finsler metrics. We prove two regularization lemmas in Subsection \ref{sec regularize} based on \cite{GreeneWu2,GreeneWu}.   

Besides our application here, the theorem in \cite{LLmax} is also used in \cite{LLextrapolation} to prove an Ohsawa--Takegoshi type extension theorem \cite{BoLem}, in  \cite{LLnoncommutative,wuDirichlet} to solve a Dirichlet problem for flat metrics on Hilbert bundles, and in \cite{albesiano2023deformation} to prove a Skoda type division theorem.

In \cite{wupositivelyII}, as a motivation, we mentioned that if $E$ is ample (Griffiths positive) then $E^*\otimes \det E$ is ample (Griffiths positive), but we did not know whether the statement holds for Kobayashi positivity. We can answer this question here by using the statement 3 in Theorem \ref{thm 1} together with the isomorphism $\bigwedge^{r-1}E\simeq 
E^*\otimes \det E $; therefore, if $E$ is Kobayashi positive then so is $E^*\otimes \det E$. A related question is that if $E$ is ample, can we prove Kobayashi positivity of $E^*\otimes \det E$? Since $E\otimes \det E^* \simeq \Hom (E^*, \det E^*)$ and $\det E^*$ is theoretically more negative than $E^*$, it is tempting to use Lempert's theorem on curvature decreasing, but we have not been able to compare the curvature of $E^*$ and $\det E^*$.

Let us discuss a naive approach to Theorem \ref{thm 1}. It is known that ampleness of $E$ is equivalent to Kobayashi negativity of $E^*$ (\cite[Theorem 5.1]{Negfinsler}), and that Theorem \ref{thm 1} is true if Kobayashi positivity is replaced with ampleness (\cite{Hart66}), so Theorem \ref{thm 1} is true for Kobayashi negativity (quotient bundle replaced with subbundle in the statement 1). However, one cannot simply take the dual Finsler metrics to conclude Theorem \ref{thm 1} because, again, taking duality does not in general flip the sign of the curvature for Finsler metrics. 

There are some results about tensor products and Kobayashi negativity. In \cite{Negfinsler}, Kobayashi asked whether one can prove the fact that Kobayashi negativity is preserved under tensor product by using a differential geometric method. The statements in \cite[Theorem 5.6 and Corollary 5.7]{DemaillyMSRI} can be viewed as an answer to Kobayashi's question. In \cite{BenAbdesselem}, Ben Abdesselem showed that if $E_1$ is Kobayashi negative  and $E_2^*$ is globally generated by holomorphic sections, then $E_1\otimes E_2$ is Kobayashi negative (see also \cite{BenAbdesselem2}). However, their results do not imply ours because one cannot take duality due to the lack of convexity. 

In addition to Theorem \ref{thm 1}, we collect a few more properties of Kobayashi positivity in Theorem \ref{thm 2} below. We use the strategy developed in the proof of Theorem \ref{thm 1} to prove them; pass to the dual bundles, manipulate plurisubharmonicity, and then take dual again using convexity. It seems likely the results in Theorem \ref{thm 2} can also be proved using curvature formulas from \cite{Negfinsler, ComplexFinsler} without passing to the dual bundles.

\begin{theorem}\label{thm 2}
\
\begin{enumerate}
    \item The direct sum of vector bundles $E_1\oplus E_2$ is Kobayashi positive if and only if $E_1$ and $E_2$ are both Kobayashi positive.
    \item In the short exact sequence of vector bundles $0\to E_1\to E \to E_2\to 0$, if $E_1$ and $E_2$ are Kobayashi positive, then $E$ is Kobayashi positive.
    \item\label{3} If $f:Y\to X$ is an immersion, and $E$ is a Kobayashi positive vector bundle over $X$, then the pull-back $f^*E$ is Kobayashi positive.
\end{enumerate}
\end{theorem}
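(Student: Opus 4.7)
\noindent\textbf{Proof proposal for Theorem \ref{thm 2}.} The unifying strategy, as outlined in the discussion preceding the statement, is to pass to the dual bundles (where Kobayashi positivity becomes Kobayashi negativity), exploit the convexity of the dual Finsler metrics supplied by \cite[Corollary 2]{wu_2022} together with Lemma \ref{sommese}, manipulate plurisubharmonic data on the total space of the dual, and then dualize back. Wherever the construction yields only a continuous or non-strictly convex metric, the regularization lemmas of Subsection \ref{sec regularize} will upgrade it to a smooth strongly pseudoconvex one so that the definition of Kobayashi positivity is literally satisfied.

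For part (1), the forward direction is immediate: $E_1$ and $E_2$ are holomorphic quotients of $E_1\oplus E_2$, so their Kobayashi positivity follows from part (1) of Theorem \ref{thm 1}. For the converse, let $h_1,h_2$ be Kobayashi positive Finsler metrics on $E_1,E_2$; by \cite[Corollary 2]{wu_2022} the duals $h_1^*,h_2^*$ are convex and Kobayashi negative, so each $\log h_i^*$ is plurisubharmonic off the zero section. On $(E_1\oplus E_2)^*=E_1^*\oplus E_2^*$ I would set
\[
h^*(\xi_1,\xi_2)=\max\bigl(h_1^*(\xi_1),\,h_2^*(\xi_2)\bigr),
\]
which is fiberwise convex and satisfies $\log h^*=\max(\log h_1^*\circ\pi_1,\log h_2^*\circ\pi_2)$, hence plurisubharmonic. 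After regularization and one application of Lemma \ref{sommese}, the dual of $h^*$ is the desired Kobayashi positive Finsler metric on $E_1\oplus E_2$.

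For part (2), I would dualize the exact sequence to $0\to E_2^*\to E^*\to E_1^*\to 0$, fix a $C^\infty$ splitting $E^*\simeq E_2^*\oplus E_1^*$, and transport the dual-sum construction of part (1) to $E^*$, with a small parameter $\epsilon>0$ rescaling the $E_1^*$ factor, in analogy with the classical $\epsilon$-perturbation that produces Griffiths positive metrics on extensions. The main obstacle, which I expect to be the genuine difficulty of Theorem \ref{thm 2}, lives precisely here: since the chosen splitting is smooth but not holomorphic, the cross term coming from the extension class (the Finsler analogue of the second fundamental form) will a priori contaminate the plurisubharmonicity of $\log h^*$, and one must verify that choosing $\epsilon$ small enough absorbs this contribution into the strict negativity already supplied by $h_2^*$. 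Assuming this step succeeds, regularization and one more application of Lemma \ref{sommese} close the argument.

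For part (3), no duality detour is needed. Let $h$ be a strongly pseudoconvex Finsler metric on $E$ with positive Kobayashi curvature and define the pullback by $(f^*h)(y,v)=h(f(y),v)$ on $f^*E$. Because $f$ is an immersion, its differential is fiberwise injective, and the total space of $f^*E$ embeds into the pullback of the total space of $E$ as a complex submanifold. Strong pseudoconvexity and positivity of the Kobayashi curvature are pointwise positivity statements for the Levi form of $\log h$ along holomorphic tangent directions, and such statements restrict faithfully to complex submanifolds, so $f^*h$ inherits both properties and exhibits the Kobayashi positivity of $f^*E$.
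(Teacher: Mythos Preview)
Your overall strategy---dualize, use the convexity supplied by Lemma \ref{convex}, manipulate plurisubharmonic data, regularize, dualize back via Lemma \ref{sommese}---is exactly the paper's, and your outline for part (2) is essentially the Umemura argument the paper follows: your small parameter $\epsilon$ plays the role of the paper's rescaling $\lambda$ of the extension class, and you have correctly located the genuine difficulty in the cross term coming from the smooth splitting.

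Two concrete gaps remain. In part (1) the paper takes the $\ell^2$-sum $(F_1^2+\varepsilon H_1)(\xi_1)+(F_2^2+\varepsilon H_2)(\xi_2)$ on $E_1^*\oplus E_2^*$, not the max. The distinction matters because Lemma \ref{for sheaf S} requires the continuous Finsler metric to be \emph{strongly} plurisubharmonic on the total space minus the zero section, and your max fails this: near any point with $h_1^*(\xi_1)>h_2^*(\xi_2)$ the max coincides with $h_1^*\circ\pi_1$, which is constant along the $E_2^*$ factor and therefore has degenerate Levi form there. The $\ell^2$-sum avoids this because the small Hermitian perturbations $\varepsilon H_i$ supply strict fiber positivity even at the zero section of one summand, while the strongly plurisubharmonic nonzero summand covers the base directions. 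Since your part (2) construction is built on the part (1) metric, it inherits the same defect.

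In part (3) the direct (no-duality) route is legitimate---the paper mentions it as an alternative---but your justification is incorrect. Kobayashi positivity is \emph{not} positivity of the Levi form of $\log h$ on the total space; by Lemma \ref{neg char} that is precisely Kobayashi \emph{negativity}, and by Lemma \ref{K positive} positivity corresponds instead to the mixed signature $(n,r-1)$ of $\Theta(h_G)$, a condition that does not automatically restrict to complex submanifolds. A correct direct argument uses Lemma \ref{lem inf}: for $0\neq w\in T^{1,0}_{y_0}Y$ set $v=f_*w\neq 0$; positivity gives a local holomorphic section $\phi$ of $E$ with $\phi(f(y_0))=\zeta$ and $\partial\bar\partial\log G(\phi)(v,\bar v)<0$, and then $\phi\circ f$ witnesses that the infimum defining $K^{f^*G}_w(\zeta)$ is negative. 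Alternatively one can use formula (\ref{local for koba}), which expresses the Kobayashi curvature purely in the base variables $dz_\alpha$ and hence pulls back to a positive form under an immersion. The paper's primary proof takes the dual route you dismissed: it pulls back a convex Kobayashi negative $F^*$ on $E^*$, observes via Lemma \ref{neg char} that strong plurisubharmonicity of $F^*$ \emph{does} pull back along the immersion $f^*E^*\to E^*$, and then dualizes back.
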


Although this paper is the third in our sequence \cite{wu_2022,wupositivelyII}, we still include a section to review Finsler metrics, Kobayashi curvature, etc., to make this paper self-contained.

I would like to thank L\'aszl\'o Lempert for many fruitful conversations. Part of the paper was done in Taipei, and I thank Academia Sinica and National Center for Theoretical Sciences for providing a stimulating environment.

\section{Preliminaries}
\subsection{Finsler metrics}\label{subsec finsler}
We give a quick review of Finsler metrics here. For more details, we recommend \cite{Negfinsler, ComplexFinsler}. See also \cite{Aikoupartial,AikouMSRI,caowong,liuchern,liudonaldson, wu_2022}.

Let $E$ be a holomorphic vector bundle of rank $r$ over a compact complex manifold $X$ of dimension $n$. For a vector $\zeta\in E_z$, we will symbolically write $(z,\zeta)\in E$. A Finsler metric $F$ on the vector bundle $E\to X$ is a real-valued function on $E$ such that 
\begin{align*}
    &(1) \text{ $F$ is continuous on the total space $E$, and $F$ is smooth away from the zero section of $E$}.\\
    &(2) \text{ For $(z,\zeta)\in E$}, F(z,\zeta)\geq 0, \text{ and equality holds if and only if $\zeta=0$}.\\
    &(3) \text{ $F(z,\lambda\zeta)=|\lambda|F(z,\zeta)$,  for $\lambda\in \mathbb{C}$}.
\end{align*}
We denote $F^2$ by $G$ throughout the paper (we will sometimes use $G$ to denote a Finsler metric). Regarding (1), if $F$ is only continuous on $E$ without being smooth on $E\setminus \{0\}$, then we call such an $F$ a continuous Finsler metric (in this paper, Finsler metrics are always smooth, unless we write \textit{continuous} Finsler metrics). 

Denote by $P(E)$ the projectivized bundle of $E$, and by $O_{P(E)}(-1)$ the tautological line bundle over $P(E)$. Let $p$ be the projection from $P(E)$ to $X$, and denote the pull-back bundle $p^*E$ by $\tilde{E}$. In summary,
\[
 \begin{tikzcd}
  O_{P(E)}(-1) \subset  \tilde{E}   \arrow{d}\arrow{r}{\tilde{p}} & E \arrow{d}\\
   P(E) \arrow{r}{p} & X.
  \end{tikzcd}
  \]
For a vector $\zeta\in E_z$, we denote by $[\zeta]$ the equivalence class of $\zeta$ in $P(E_z)$, and we will write $(z,[\zeta])\in P(E)$. The pull-back bundle $\tilde{E}=\coprod_{(z,[\zeta])\in P(E)}(z,[\zeta])\times E_z$, and we will denote an element in $\tilde{E}$ by $(z,[\zeta],Z)$ with $Z\in E_z$. There is a one-to-one correspondence between Finsler metrics on $E$ and Hermitian metrics on $O_{P(E)}(-1)$, furnished by $\tilde{p}$.
If $F$ is a Finsler metric on $E$ with $G=F^2$ and we denote the corresponding Hermitian metric on $O_{P(E)}(-1)$ by $h_G$, then the correspondence is 
\begin{equation}\label{corres}
    h_G(z,[\zeta],\zeta)=G(z,\zeta).
\end{equation}

When we need to do local computations, we use $(z_1,...,z_n)$ for local coordinates on $X$, and $(\zeta_1,...,\zeta_r)$ for local fiber coordinates on $E$ with respect to a holomorphic frame $\{e_1,...,e_r\}$. So $(z_1,...,z_n,\zeta_1,...,\zeta_r)$ is a coordinate system on $E$. Let $F$ be a Finsler metric on $E$ with $G=F^2$. We write 
\begin{align*}
G_i=\partial G/\partial \zeta_i\,,\text{      }\text{  }G_{\bar{j}}=\partial G/\partial \bar{\zeta}_j\,,\text{      }\text{  } G_{i\Bar{j}}=\partial^2 G/\partial \zeta_i \partial\bar{\zeta}_{j}\,, \\    G_{i\alpha}=\partial G_i/\partial z_\alpha\,,\text{      }\text{  } G_{i\bar{j}\bar{\beta}}=\partial G_{i\bar{j}}/\partial \bar{z}_\beta\,,\text{ etc.,}
\end{align*}  
with Latin letters $i,j$ for the fiber direction and Greek letters $\alpha,\beta$ for the base direction. From \cite[(3.3) and (3.10)]{ComplexFinsler}, we have 
\begin{equation}\label{summmmm}
 \sum_j G_{i\Bar{j}}(z,\zeta)\Bar{\zeta}_j=G_i(z,\zeta) \text{, and } \sum_{i,j} G_{i\Bar{j}}(z,\zeta)\zeta_i\Bar{\zeta}_j=G(z,\zeta). 
\end{equation}


A Finsler metric $F$ is said to be
\begin{align*}
(1) &\text{ strongly pseudoconvex if the matrix $(F_{i\bar{j}})$ is positive definite on $E\setminus \{\text{0}\}$}.\\
(2) &\text{ convex if $F$ restricted to each fiber $E_z$ is convex},\\ &\text{ namely the fiberwise real Hessian of $F$ is positive semidefinite on $E\setminus \{\text{0}\}$}.\\
(3) &\text{ strongly convex if the fiberwise real Hessian of $G$ is positive definite on $E\setminus \{\text{0}\}$}.
\end{align*}
Note that in (1), the requirement $(F_{i\bar{j}})>0$ is equivalent to $(G_{i\bar{j}})>0$. Indeed, since $G=F^2$, we have $G_{i\bar{j}}=2F_iF_{\bar{j}}+2FF_{i\bar{j}}$ and so $(F_{i\bar{j}})>0$ implies $(G_{i\bar{j}})>0$. On the other hand, we have $$F_{i\bar{j}}=\frac{1}{4}G^{-\frac{3}{2}}(2G_{i\bar{j}}G-G_iG_{\bar{j}}    )=\frac{1}{4}G^{-\frac{3}{2}}(G_{i\bar{j}}G+G_{i\bar{j}}G-G_iG_{\bar{j}}    )$$
and if $(G_{i\bar{j}})>0$, we have $(\sum_{i,j}G_{i\Bar{j}}v_i \Bar{v}_j) G-(\sum_iG_i v_i )(\sum_j G_{\Bar{j}}\Bar{v}_j)\geq 0$ for any $v\in \mathbb{C}^r$ by (\ref{summmmm}) and the Cauchy--Schwarz inequality, so $(F_{i\Bar{j}})>0$. 

If $F_1$ and $F_2$ are two convex Finsler metrics on $E$, then the Finsler metric $(F_1^2+F_2^2)^{1/2}$ is still convex by a direct computation and the Cauchy--Schwarz inequality.

The next is a lemma characterizing strong pseudoconvexity (\cite[Theorem (1)]{ComplexFinsler}).
\begin{lemma}\label{fiber neg}
 Let $F$ be a Finsler metric on $E$ and $h_G$ be its corresponding Hermitian metric on $O_{P(E)}(-1)$. Then $F$ is strongly pseudoconvex if and only if the curvature $\Theta(h_G)$ of $h_G$ restricted to each fiber of $P(E)\to X$ is negative definite.
\end{lemma}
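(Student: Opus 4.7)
My plan is to translate both conditions into an explicit linear-algebra statement about the Hermitian matrix $(G_{i\bar j})$ and compare them. Fix a point $(z_0,[\zeta_0])\in P(E)$, choose a holomorphic frame near $z_0$ so that the last coordinate of $\zeta_0$ is nonzero, and use the inhomogeneous fiber coordinates $w_i=\zeta_i/\zeta_r$ on $P(E)$. Then $\sigma(z,[\zeta])=(z,w,1)\in \tilde E$ is a local holomorphic section of $O_{P(E)}(-1)$, and by (\ref{corres}) we have $h_G(\sigma)=G(z,(w,1))$. Consequently $\Theta(h_G)=-\partial\bar\partial\log G(z,(w,1))$, and restricting to the fiber $\{z=\mathrm{const}\}$ of $p:P(E)\to X$ gives
\[
\Theta(h_G)\big|_{\mathrm{fib}}=-\sum_{i,j=1}^{r-1}(\log G)_{i\bar j}(z,(w,1))\,dw_i\wedge d\bar w_j,
\]
where $(\log G)_{i\bar j}=G_{i\bar j}/G-G_iG_{\bar j}/G^2$. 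Thus negative definiteness of the fiber curvature is equivalent to pointwise positive definiteness of the $(r-1)\times(r-1)$ matrix $\bigl((\log G)_{i\bar j}\bigr)_{1\le i,j\le r-1}$ at every $\zeta=(w,1)$.

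\textbf{Forward direction.} If $(G_{i\bar j})>0$ on $E\setminus\{0\}$, the Cauchy--Schwarz inequality for the positive-definite inner product associated to $(G_{i\bar j})$ gives
\[
\bigl|\textstyle\sum_{i,j}G_{i\bar j}v_i\bar\zeta_j\bigr|^2\le\bigl(\textstyle\sum G_{i\bar j}v_i\bar v_j\bigr)\bigl(\textstyle\sum G_{i\bar j}\zeta_i\bar\zeta_j\bigr),
\]
with equality iff $v\parallel\zeta$. By (\ref{summmmm}) the left side equals $|\sum G_iv_i|^2$ and the rightmost factor equals $G$, so after dividing by $G^2$ one gets $\sum(\log G)_{i\bar j}v_i\bar v_j\ge 0$ with equality only for $v\parallel\zeta$. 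At $\zeta=(w,1)$, any $v$ with $v_r=0$ that is proportional to $\zeta$ must vanish, so the restricted Hessian is strictly positive.

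\textbf{Converse direction, and the main obstacle.} By homogeneity $G_{i\bar j}(\lambda\zeta)=G_{i\bar j}(\zeta)$ (with a possible permutation of frame choosing which coordinate is $\ne 0$), it suffices to show $(G_{i\bar j})>0$ at each $\zeta=(w,1)$. Decompose
\[
(G_{i\bar j})=B+C,\qquad B_{ij}:=G\cdot(\log G)_{i\bar j},\qquad C_{ij}:=G_iG_{\bar j}/G.
\]
The hypothesis makes $B$ positive definite on $\{v:v_r=0\}$, while (\ref{summmmm}) gives $B\bar\zeta=0$. Since $\bar\zeta_r=1\ne 0$, writing any $u\in\mathbb C^r$ as $u=v+t\bar\zeta$ with $v_r=0$ yields $u^*Bu=v^*Bv\ge 0$, so $B\ge 0$ globally with null space $\operatorname{span}(\bar\zeta)$. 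The matrix $C$ is rank-one PSD, and on the line $\{u=t\bar\zeta\}$ a direct computation gives $u^*Cu=|t|^2G$, using the identity $\sum_i G_i\zeta_i=G=\sum_j G_{\bar j}\bar\zeta_j$ that follows from (\ref{summmmm}). Hence $B$ and $C$ are strictly positive on complementary pieces of $\mathbb C^r$, and $(G_{i\bar j})=B+C>0$. The one nontrivial step is precisely this last one: $\bar\zeta$ is a null direction of $B$, and one must verify that $C$ is nondegenerate there, which is exactly what the Euler-type identity $\sum G_i\zeta_i=G$ delivers.
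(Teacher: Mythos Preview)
Your argument is correct. The paper does not actually prove this lemma; it simply records it as \cite[Theorem (1)]{ComplexFinsler} and uses it as a black box. Your direct computation---identifying the fiber curvature with the $(r-1)\times(r-1)$ block of $(\log G)_{i\bar j}$, then using Cauchy--Schwarz and the Euler identities (\ref{summmmm}) to pass between positivity of $(G_{i\bar j})$ on $\mathbb C^r$ and positivity of that block---is exactly the standard route and is cleanly executed.

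One small notational remark: you write the null direction of $B$ as $\bar\zeta$ and decompose $u=v+t\bar\zeta$. This is consistent with the matrix convention $u^*Bu=\sum_{i,j}\bar u_iB_{ij}u_j$, but in the paper's notation the quadratic forms are written as $\sum B_{i\bar j}v_i\bar v_j$, for which the null direction is $\zeta$ rather than $\bar\zeta$. The two conventions are equivalent (one passes to the other by $v\mapsto\bar v$), so nothing is wrong, but it would read more smoothly if you matched the paper's convention throughout.
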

For a strongly pseudoconvex Finsler metric $F$, there is a natural Hermitian metric $\tilde{G}$ on the pull-back bundle $\tilde{E}$ (see \cite{Negfinsler}). In terms of local coordinates, the Hermitian metric $\tilde{G}$ is given by  
$$\tilde{G}_{(z,[\zeta])}(Z,Z)=\sum_{i,j}G_{i\bar{j}}(z,\zeta)Z_i\bar{Z}_j, \text{ for $Z=\sum^r_{i=1}Z_i s_i(z)\in E_z$}.$$
Now since $(\tilde{E},\tilde{G})$ is a Hermitian holomorphic vector bundle, we can talk about its Chern curvature $\Theta$, an $\End \tilde{E}$-valued $(1,1)$-form on $P(E)$. With respect to the metric $\tilde{G}$, the bundle $\tilde{E}$ has a fiberwise orthogonal decomposition $O_{P(E)}(-1)\oplus O_{P(E)}(-1)^\perp$, and so $\Theta$ can be written as a block matrix. Let $\Theta|_{O_{P(E)}(-1)}$ denote the block in the matrix $\Theta$ corresponding to $\End(O_{P(E)}(-1))$. Since $O_{P(E)}(-1)$ is a line bundle, $\Theta|_{O_{P(E)}(-1)}$ is a $(1,1)$-form on $P(E)$.
\begin{definition}
The $(1,1)$-form $\Theta|_{O_{P(E)}(-1)}$ is called the Kobayashi curvature of the strongly pseudoconvex Finsler metric $F$. Kobayashi positivity (or negativity) means the positivity (or negativity) of $\Theta|_{O_{P(E)}(-1)}$.
\end{definition}

We recall a local expression of the Kobayashi curvature (\cite[Formula (2.4)]{wu_2022}). We focus on a local chart $\{(z,[\zeta]):\zeta_r\neq 0\}$ of $P(E)$, and let $\zeta_i/\zeta_r=w_i$ for $1\leq i\leq  r-1$. The formula is 
\begin{equation}\label{local for koba}
  \Theta|_{O_{P(E)}(-1)}=\sum_{\alpha,\beta} \frac{\tilde{G}(R_{\alpha\bar{\beta}}\zeta,\zeta)}{\tilde{G}(\zeta,\zeta)}\,dz_\alpha \wedge d\bar{z}_\beta,
\end{equation}
where $R_{\alpha\bar{\beta}}$ is an endomorphism of $\tilde{E}$. 

We also have the following characterization (\cite[Section 5]{ComplexFinsler}).
\begin{lemma}\label{K positive}
Let $F$ be a strongly pseudoconvex Finsler metric on $E$ and $h_G$ be the corresponding Hermitian metric on $O_{P(E)}(-1)$. Then $F$ is Kobayashi positive (or negative) if and only if $\Theta(h_G)$ has signature $(n,r-1)$ (or $(0,n+r-1)$).
\end{lemma}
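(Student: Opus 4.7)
The plan is to reduce the lemma to a block-matrix signature calculation via Schur complements. Fix a point $(z_0,[\zeta_0])\in P(E)$ and work in the chart $\{\zeta_r\neq 0\}$ with affine fiber coordinates $w_i=\zeta_i/\zeta_r$, $1\leq i\leq r-1$, and base coordinates $z_\alpha$; after a choice of holomorphic frame we may assume $\zeta_0=(0,\ldots,0,1)$. Trivialize $O_{P(E)}(-1)$ near this point by the nowhere-vanishing holomorphic section $(z,w)\mapsto(w_1,\ldots,w_{r-1},1)$. By the correspondence (\ref{corres}), the local weight of $h_G$ in this trivialization is $\varphi(z,w):=G(z,w_1,\ldots,w_{r-1},1)$, so $\Theta(h_G)=-\partial\bar\partial\log\varphi$ is a real $(1,1)$-form on an $(n+r-1)$-dimensional polydisc.

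Write the Hermitian coefficient matrix of $\Theta(h_G)$ in the basis $\{dz_\alpha, dw_i\}$ in block form
\[
\Theta(h_G)=\begin{pmatrix} A & B \\ B^* & C \end{pmatrix},
\]
with $A$ the $n\times n$ horizontal block, $C$ the $(r-1)\times(r-1)$ vertical block, and $B$ the mixed block. Restricting $\Theta(h_G)$ to the fiber $P(E_{z_0})$ kills the $dz_\alpha$ entries and leaves precisely $C$; hence, by Lemma \ref{fiber neg}, strong pseudoconvexity of $F$ forces $C$ to be negative definite, and in particular invertible. The standard block decomposition
\[
\begin{pmatrix} A & B \\ B^* & C \end{pmatrix}=\begin{pmatrix} I & BC^{-1} \\ 0 & I \end{pmatrix}\begin{pmatrix} A-BC^{-1}B^* & 0 \\ 0 & C \end{pmatrix}\begin{pmatrix} I & 0 \\ C^{-1}B^* & I \end{pmatrix}
\]
combined with Sylvester's law of inertia shows that $\Theta(h_G)$ has signature $(n,r-1)$ if and only if the Schur complement $S:=A-BC^{-1}B^*$ is positive definite, and signature $(0,n+r-1)$ if and only if $S$ is negative definite.

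It then remains to identify $S$ with the coefficient matrix of the Kobayashi curvature form (\ref{local for koba}). Conceptually this is an instance of Griffiths' formula for the Hermitian subbundle $O_{P(E)}(-1)\hookrightarrow(\tilde E,\tilde G)$: the induced-metric curvature $\Theta(h_G)$ differs from the projection to $O_{P(E)}(-1)$ of the ambient curvature $\Theta(\tilde G)$ by a second fundamental form term $-\beta^*\wedge\beta$, and by definition $\Theta|_{O_{P(E)}(-1)}$ is the horizontal part of that projection. The algebraic operation of subtracting $BC^{-1}B^*$ from $A$ is exactly what removes, along the horizontal-horizontal directions, the contribution of $\beta^*\wedge\beta$ coming from the vertical directions. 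Concretely one differentiates $\log\varphi$ at $(z_0,0)$, applies the identities (\ref{summmmm}) to rewrite $\varphi_\alpha,\varphi_i,\varphi_{i\bar j}$ in terms of $G_i,G_{i\bar j},G_{i\bar\beta}$, and verifies term by term that $S_{\alpha\bar\beta}=\tilde G(R_{\alpha\bar\beta}\zeta_0,\zeta_0)/\tilde G(\zeta_0,\zeta_0)$, which is precisely the coefficient in (\ref{local for koba}).

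The main obstacle is this last identification: it is the only place where one must translate between the two local descriptions of the same object, namely $\partial\bar\partial$ of the weight of the induced metric on a line subbundle versus the horizontal block of the ambient Chern curvature projected to that subbundle. The computation is routine once one normalizes so that $\zeta_0=e_r$ and uses (\ref{summmmm}) to collapse the terms produced by differentiating $w_i$ against $G_i$, but bookkeeping the mixed derivatives correctly—so that the algebraic $BC^{-1}B^*$ correction matches the geometric $\beta^*\wedge\beta$ correction—is the nontrivial bit. With that in hand, both equivalences claimed in the lemma follow at once from the Schur-complement analysis of the previous paragraph.
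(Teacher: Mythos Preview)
The paper does not supply its own proof of this lemma; it is stated with a citation to \cite[Section~5]{ComplexFinsler} and used as a black box. So there is no in-paper argument to compare against, only the original source.

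Your strategy is correct and is essentially the one behind the cited result: block-decompose $\Theta(h_G)$, use Lemma~\ref{fiber neg} to get $C<0$, and reduce the signature question to the Schur complement $S=A-BC^{-1}B^*$. The identification $S=K$ (the horizontal coefficient matrix of the Kobayashi curvature) is also correct, but you make it harder than necessary. Two cleaner routes:

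\begin{enumerate}
\item \textbf{Normal coordinates.} The paper itself records, in (\ref{normal}), that for strongly pseudoconvex $F$ one may choose a frame with $G_\alpha(z_0,\zeta_0)=G_{\alpha\bar j}(z_0,\zeta_0)=0$. In these coordinates the mixed block $B$ of $-\partial\bar\partial\log\varphi$ vanishes at the point, so the Schur complement is simply $S=A$, and $A_{\alpha\bar\beta}=-G_{\alpha\bar\beta}/G$ at $(z_0,\zeta_0)$ is, up to a positive factor, exactly the Kobayashi curvature there (this is the content of \cite[(5.10),(5.16)]{ComplexFinsler}, also used in the proof of Lemma~\ref{lem signature}). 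No term-by-term bookkeeping is needed.
\item \textbf{Rank of the second fundamental form.} If you prefer your Griffiths-formula viewpoint: from $\Theta(h_G)=\Theta|_{O_{P(E)}(-1)}-\beta^*\wedge\beta$ and the fact (formula (\ref{local for koba})) that $\Theta|_{O_{P(E)}(-1)}$ is purely horizontal, the matrix of $-\beta^*\wedge\beta$ is $\begin{pmatrix}A-K & B\\ B^* & C\end{pmatrix}$. Since $\beta$ takes values in the $(r-1)$-dimensional bundle $O_{P(E)}(-1)^\perp$, this matrix has rank at most $r-1$; as $C$ is invertible it has rank exactly $r-1$, so its Schur complement $(A-K)-BC^{-1}B^*$ vanishes, giving $S=K$ directly.
\end{enumerate}

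Either of these replaces the ``nontrivial bit'' you flag with a one-line argument; the rest of your write-up is fine.
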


For a strongly pseudoconvex Finsler metric $F$, one can talk about coordinates normal at a point. Given a point $(z_0,[\zeta_0])\in P(E)$, there exists a holomorphic frame $\{e_i\}$ for $E$ around $z_0\in X$ such that 
\begin{equation}\label{normal}
 G_{\alpha}(z_0,\zeta_0)=0,\,\, G_{\alpha \bar{j} }(z_0,\zeta_0)=0
\end{equation}
(such a frame can be obtained by (5.11) in \cite{ComplexFinsler}).

Finally, we recall a formula from \cite[Section 5]{wu_2022} (see also \cite{rochberg1984,LLmax,LLextrapolation}). Let $F$ be a strongly pseudoconvex Finsler metric on $E$ with $F^2=G$. For $z_0\in X$, $v\in T^{1,0}_{z_0} X$ and $0\neq\zeta\in E_{z_0}$, we define
\begin{equation}\label{inf}
K_v(\zeta)=-\inf \partial\bar{\partial}\log G(\phi)\bigr|_{z_0}(v,\bar{v}),
\end{equation}
the inf taken over local holomorphic sections $\phi$ of $E$ such that $\phi(z_0)=\zeta$. The inf is actually minimum (see \cite[Proof of Lemma 9]{wu_2022}). Consider a tangent vector $\tilde{v}$  to $P(E)$ at $(z_0,[\zeta])$ such that $p_*(\tilde{v})=v$. The formula is 
\begin{lemma}\label{lem inf}
 $K_v(\zeta)=\Theta|_{O_{P(E)}(-1)}(\tilde{v},\bar{\tilde{v}})$. 
\end{lemma}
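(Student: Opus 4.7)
The plan is to pick a normal frame at $(z_0,[\zeta])$ satisfying (\ref{normal}) and reduce both sides of the claimed identity to the common expression $-G_{\alpha\bar\beta}(z_0,\zeta)\,v^\alpha\bar{v}^\beta/G(z_0,\zeta)$.

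In such a frame, $G_\alpha$, $G_{\bar\alpha}$, $G_{\alpha\bar j}$, and $G_{j\bar\alpha}$ all vanish at $(z_0,\zeta)$ (the last two by complex conjugation). Writing $\phi=\sum\phi_i e_i$ with $\phi_i$ holomorphic and $\phi_i(z_0)=\zeta_i$, and setting $\psi=\partial_v\phi(z_0)$, the chain rule together with the vanishing cross terms gives
\[
\partial\bar\partial\log G(z,\phi(z))\bigr|_{z_0}(v,\bar{v}) = \frac{G_{\alpha\bar\beta}v^\alpha\bar{v}^\beta}{G} + \frac{1}{G}\sum_{i,j} G_{i\bar j}\psi_i\bar\psi_j - \frac{1}{G^2}\Bigl|\sum_i G_i\psi_i\Bigr|^2
\]
at $(z_0,\zeta)$. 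By (\ref{summmmm}), $\sum_i G_i\psi_i = \sum_{i,j}G_{i\bar j}\psi_i\bar\zeta_j$ is the $(G_{i\bar j})$-Hermitian pairing of $\psi$ and $\zeta$, so Cauchy--Schwarz (valid since strong pseudoconvexity makes $(G_{i\bar j})$ positive definite) gives $|\sum_i G_i\psi_i|^2 \leq G\sum_{i,j} G_{i\bar j}\psi_i\bar\psi_j$, with equality iff $\psi\in\langle\zeta\rangle$. The last two terms are therefore nonnegative and vanish for, e.g., $\phi$ constant in the frame, so the infimum equals $G_{\alpha\bar\beta}v^\alpha\bar{v}^\beta/G$ and hence
\[
K_v(\zeta)=-\frac{G_{\alpha\bar\beta}(z_0,\zeta)v^\alpha\bar{v}^\beta}{G(z_0,\zeta)}.
\]

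For the right-hand side, (\ref{local for koba}) gives $\Theta|_{O_{P(E)}(-1)}(\tilde{v},\bar{\tilde{v}}) = \sum\tilde G(R_{\alpha\bar\beta}\zeta,\zeta)v^\alpha\bar{v}^\beta/G$, depending only on $v=p_*\tilde{v}$. Substituting the Chern-curvature formula
\[
\Theta_{\alpha\bar\beta,i\bar j} = -\partial_\alpha\bar\partial_\beta \tilde G_{i\bar j} + \sum_{k,l}\tilde G^{k\bar l}(\partial_\alpha\tilde G_{i\bar l})(\bar\partial_\beta\tilde G_{k\bar j})
\]
into $\tilde G(R_{\alpha\bar\beta}\zeta,\zeta)=\sum\Theta_{\alpha\bar\beta,i\bar j}\zeta^i\bar\zeta^j$ and using $\tilde G_{i\bar j}(z,[\zeta])=G_{i\bar j}(z,\zeta)$, the first piece becomes $-\partial_\alpha\bar\partial_\beta(\sum G_{i\bar j}\zeta^i\bar\zeta^j)|_{z_0} = -\partial_\alpha\bar\partial_\beta G(z,\zeta)|_{z_0} = -G_{\alpha\bar\beta}(z_0,\zeta)$ via (\ref{summmmm}); the second piece collapses, after applying the same identity $\sum_i(\partial_\alpha G_{i\bar l})\zeta^i = G_{\alpha\bar l}$ and its conjugate, to $\sum_{k,l}\tilde G^{k\bar l}\,G_{\alpha\bar l}\,G_{k\bar\beta}$, which vanishes at $(z_0,\zeta)$ in the normal frame. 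Hence both sides equal $-G_{\alpha\bar\beta}(z_0,\zeta)v^\alpha\bar{v}^\beta/G(z_0,\zeta)$.

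The main subtlety, as I see it, is that the Finsler normal frame (\ref{normal}) is not a Hermitian normal frame for $\tilde G$ on $\tilde E$ (the full $\partial_\alpha\tilde G_{i\bar j}$ need not vanish at the point), so the connection-form correction to the Chern curvature is not zero a priori; the key observation is that after contracting with $\zeta^i\bar\zeta^j$ and invoking degree-one homogeneity of $G_i$ and $G_{\bar j}$, the correction collapses precisely to products of $G_{\alpha\bar l}$ and $G_{k\bar\beta}$, the very quantities (\ref{normal}) forces to vanish.
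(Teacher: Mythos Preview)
Your argument is correct. The paper does not actually supply a proof of this lemma; it merely recalls the formula from \cite[Section 5]{wu_2022} (with the observation that the infimum is attained relegated to \cite[Proof of Lemma 9]{wu_2022}), so there is no in-paper proof to compare against. Your normal-coordinate computation is the standard route: both the evaluation of the infimum via Cauchy--Schwarz (using (\ref{summmmm}) and strong pseudoconvexity) and the collapse of the connection-correction term after contraction with $\zeta^i\bar\zeta^j$ are exactly the mechanisms at work in \cite{ComplexFinsler}, and your identification of the common value $-G_{\alpha\bar\beta}(z_0,\zeta)v^\alpha\bar v^\beta/G(z_0,\zeta)$ agrees with what the paper itself invokes later (see the end of the proof of Lemma \ref{lem signature}, where it cites \cite[(5.10), (5.16)]{ComplexFinsler} for precisely this fact). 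Your closing remark about why the Finsler normal frame suffices despite not trivializing $\partial_\alpha\tilde G_{i\bar j}$ is well taken and identifies the genuine content of the computation.
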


\subsection{Lemmas}
We collect some lemmas in this subsection that will be used later.
\begin{lemma}\label{neg char}
Let $F$ be a strongly pseudoconvex Finsler metric on $E$. The following are equivalent.
\begin{align*}
(1) &\text{ The Finsler metric $F$ has negative Kobayashi curvature}.\\
(2) &\text{ $F$ is strongly plurisubharmonic on $E \setminus \{0\}$}.\\ 
(3) &\text{ $F^2$ is strongly plurisubharmonic on $E \setminus \{0\}$}.
\end{align*}
If any of the above happens, then $F$ is plurisubharmonic on $E$. 
\end{lemma}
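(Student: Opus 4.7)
\emph{Proof proposal.}
The plan is to transport the analysis to the tautological line bundle via the biholomorphism $\tilde p\colon O_{P(E)}(-1)\setminus\{0\}\to E\setminus\{0\}$. Under this identification and (\ref{corres}), the function $G$ becomes the squared-norm function of the Hermitian line bundle $(O_{P(E)}(-1),h_G)$, and the classical line-bundle identity for the Hessian of the logarithm of the norm gives, on $E\setminus\{0\}$,
\[
 \partial\bar\partial\log G \;=\; -\,\Theta|_{O_{P(E)}(-1)},
\]
where the right-hand side is the pull-back of the Kobayashi curvature from $P(E)$.

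For $(1)\Leftrightarrow(3)$ I would write
\[
 \partial\bar\partial G \;=\; G\bigl(\partial\bar\partial\log G + \partial\log G\wedge\bar\partial\log G\bigr)
\]
and analyze positivity using the splitting of tangent vectors on $E\setminus\{0\}$ coming from the $\mathbb{C}^{\ast}$-fibration $O_{P(E)}(-1)\setminus\{0\}\to P(E)$, into horizontal and vertical parts. The first summand, being a pull-back from $P(E)$, vanishes in the vertical direction; the second is a rank-one semi-positive form which, by the Euler identity $\sum_i G_i\zeta_i=G$ coming from (\ref{summmmm}), takes the value $1$ along that direction. A Schur-complement computation in this splitting cancels the cross terms and reduces positive-definiteness of $\partial\bar\partial G$ on $E\setminus\{0\}$ to positive-definiteness of $-\Theta|_{O_{P(E)}(-1)}$ on $P(E)$, which by Lemma \ref{K positive} is precisely Kobayashi negativity of $F$.

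For $(2)\Leftrightarrow(3)$, the direction $(2)\Rightarrow(3)$ is immediate from $\partial\bar\partial G = 2F\,\partial\bar\partial F+2\,\partial F\wedge\bar\partial F$. For the reverse I would apply the same recipe to $\log F = \tfrac{1}{2}\log G$, obtaining $\partial\bar\partial F = F\bigl(\tfrac{1}{2}\partial\bar\partial\log G + \tfrac{1}{4}\partial\log G\wedge\bar\partial\log G\bigr)$; the analogous Schur reduction delivers $-\tfrac{1}{2}\Theta|_{O_{P(E)}(-1)}$, again equivalent to Kobayashi negativity. The plurisubharmonic extension of $F$ from $E\setminus\{0\}$ to all of $E$ then follows from the continuity of $F$ combined with the standard removable-singularity theorem for psh functions across the analytic (hence pluripolar) zero section. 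The main obstacle will be the Schur-complement bookkeeping: one has to verify that the degeneracy of $\partial\bar\partial\log G$ along the $\mathbb{C}^{\ast}$-orbits is exactly compensated by the rank-one form $\partial\log G\wedge\bar\partial\log G$, so that the residual horizontal form is precisely $-\Theta|_{O_{P(E)}(-1)}$.
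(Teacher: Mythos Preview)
Your overall strategy is right, but you are conflating two distinct curvature forms. In this paper $\Theta|_{O_{P(E)}(-1)}$ denotes the \emph{Kobayashi curvature}, defined as a block of the Chern curvature of $(\tilde E,\tilde G)$; by (\ref{local for koba}) it contains only $dz_\alpha\wedge d\bar z_\beta$ components and is therefore degenerate along the fibres of $P(E)\to X$, so the assertion ``$-\Theta|_{O_{P(E)}(-1)}$ is positive definite on $P(E)$'' does not make sense as stated. The form that actually occurs in the line-bundle identity is the Chern curvature $\Theta(h_G)$ of $(O_{P(E)}(-1),h_G)$: the correct relation is $\partial\bar\partial\log G=-\pi^*\Theta(h_G)$ with $\pi:E\setminus\{0\}\to P(E)$ the quotient map (this is exactly what is used in the proof of Lemma~\ref{lem signature}). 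With that substitution your block analysis shows $\partial\bar\partial G>0$ (resp.\ $\partial\bar\partial F>0$) on $E\setminus\{0\}$ is equivalent to $\Theta(h_G)<0$ on $P(E)$, and Lemma~\ref{K positive} then identifies the latter with Kobayashi negativity. Incidentally, if you choose the horizontal complement to be $H=\ker\partial\log G$, the Hessian is already block-diagonal and no genuine Schur complement is needed.

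With that correction in place, your route differs from the paper's. For $(1)\Leftrightarrow(2)$ the paper invokes \cite[Theorem~1.3]{DemaillyMSRI} together with Lemma~\ref{K positive}; for $(3)\Rightarrow(2)$ it computes the complex Hessian of $F$ directly in the normal frame (\ref{normal}), where $G_\alpha=G_{\alpha\bar j}=0$ kill the off-diagonal blocks and leave a manifestly positive block-diagonal matrix. Your argument trades the external citation and the normal-coordinate frame for a coordinate-free line-bundle computation; the paper's version is shorter once those tools are granted, while yours is more self-contained. For the extension of $F$ across the zero section, your removable-singularity argument (analytic $\Rightarrow$ pluripolar, $F$ continuous $\Rightarrow$ locally bounded) is valid; the paper argues instead via Lemma~\ref{lem inf} and \cite[Lemma~6.1]{wuwess}, or alternatively via the sub-mean-value inequality.
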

\begin{proof}
    For the equivalence between (1) and (2), one can use \cite[(3) and (4) in Theorem 1.3]{DemaillyMSRI} along with Lemma \ref{K positive}. 

    The implication from (2) to (3) is obvious. From (3) to (2), since the Finsler metric $F$ is strongly pseudoconvex, we can use the normal coordinates (\ref{normal}) to compute the complex Hessian of $F$ at one point, which gives
\begin{equation*}
\begin{pmatrix}
    \frac{1}{4}G^{-\frac{3}{2}}(2G_{\alpha\bar{\beta}}G-G_\alpha G_{\bar{\beta}}), & \frac{1}{4}G^{-\frac{3}{2}}(2G_{\alpha\bar{j}}G-G_\alpha G_{\bar{j}})\\
    \frac{1}{4}G^{-\frac{3}{2}}(2G_{i\bar{\beta}}G-G_iG_{\bar{\beta}}),& F_{i\Bar{j}}
\end{pmatrix}=
\begin{pmatrix}
    \frac{1}{4}G^{-\frac{3}{2}}2G_{\alpha\bar{\beta}}G, & 0\\
    0,& F_{i\Bar{j}}
\end{pmatrix}
\end{equation*}    
a positive matrix. So, $F$ is strongly plurisubharmonic. (The equivalence between (1) and (2) can also be proved using Lemma \ref{lem inf} and computing the needed complex Hessian with the normal coordinates (\ref{normal})). 
        
The last statement that $F$ is plurisubharmonic on $E$ is true because $\log F(\phi )$ is plurisubharmonic for any local holomorphic section $\phi$ by Lemma \ref{lem inf} and then we use \cite[Lemma 6.1]{wuwess}. Alternatively, since $F$ is non-negative and is zero on the zero section of $E$, if $F$ is strongly plurisubharmonic on $E \setminus \{0\}$, then $F$ is plurisubharmonic on $E$ for $F$ satisfying the mean value inequality at every point in $E$.
\end{proof}

It is known that a Hermitian metric is Griffiths positive if and only if its dual Hermitian metric is Griffiths negative. The following lemma is a Finsler analogue, and the core of the lemma is due to \cite{Sommese} and \cite[Theorem 2.5]{DemaillyMSRI}. The convexity assumption is important, as we are taking dual Finsler metrics in the proof (see \cite[Remark 2.7]{DemaillyMSRI}). We will call a vector bundle $E$ convex Kobayashi positive (negative) if $E$ carries a convex and strongly pseudoconvex Finsler metric with positive (negative) Kobayashi curvature. In \cite[Lemma 2.1]{LLextrapolation}, Lempert also obtained some duality results for Banach bundles.

\begin{lemma}\label{sommese}
A vector bundle $E$ is convex Kobayashi positive if and only if its dual $E^*$ is convex Kobayashi negative.
\end{lemma}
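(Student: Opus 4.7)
The plan is to construct the dual Finsler metric $F^*$ on $E^*$ explicitly from a convex Kobayashi positive $F$ on $E$ and to verify that $F^*$ is convex, strongly pseudoconvex, and has negative Kobayashi curvature. The reverse implication then follows from the bipolar theorem: convexity of $F$ on each fiber gives $F^{**}=F$, so applying the forward implication to $F^*$ recovers $F$.

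Set $F^*(z,\xi)=\sup\{|\langle\xi,\zeta\rangle|:\zeta\in E_z,\;F(z,\zeta)\le 1\}$. Convexity of $F^*$ on each fiber is immediate, since it is a supremum of seminorms. For smoothness and strong pseudoconvexity of $F^*$ on $E^*\setminus\{0\}$, the key is that convexity of $F$ together with strong pseudoconvexity $(F_{i\bar j})>0$ and absolute homogeneity gives strict fiberwise convexity of $F$ transverse to the complex scaling direction; the extremizer $\zeta(z,\xi)$ realizing the sup in $F^*(z,\xi)$ is then unique up to a phase and depends smoothly on $(z,\xi)$ by a Legendre-transform / implicit-function argument. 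This is the Sommese/Demailly duality for convex Finsler norms (\cite{Sommese}, \cite[Theorem~2.5]{DemaillyMSRI}).

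For the sign flip of the Kobayashi curvature, I would work via the variational characterization (\ref{inf}) together with Lemma \ref{lem inf}: it suffices to show that for each $z_0\in X$ and each local holomorphic section $\psi$ of $E^*$ with $\psi(z_0)\ne 0$, the function $z\mapsto\log F^*(z,\psi(z))^2$ is strongly plurisubharmonic at $z_0$. Fix such a $\psi$, set $\xi_0=\psi(z_0)$, and let $\zeta_0\in E_{z_0}$ with $F(z_0,\zeta_0)=1$ be the extremizer for $F^*(z_0,\xi_0)$. In a holomorphic frame of $E$ normal at $(z_0,[\zeta_0])$ in the sense of (\ref{normal}), let $\phi$ denote the local section constant equal to $\zeta_0$ in that frame. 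A direct normal-coordinate computation using (\ref{summmmm}) and the local formula (\ref{local for koba}) identifies the complex Hessian $\partial\bar\partial\log G(\phi)|_{z_0}(v,\bar v)$ with $-K_v(\zeta_0)$, which is strictly negative definite by positive Kobayashi curvature of $F$; hence $z\mapsto-\log F(z,\phi(z))$ is strongly plurisubharmonic at $z_0$. Since
\[
\log F^*(z,\psi(z))\;\ge\;\log|\langle\psi(z),\phi(z)\rangle|\;-\;\log F(z,\phi(z))
\]
holds near $z_0$ with equality at $z_0$, and the right-hand side is pluriharmonic plus strongly plurisubharmonic, a touching-from-below argument yields strong plurisubharmonicity of $z\mapsto\log F^*(z,\psi(z))$ at $z_0$, which is precisely negativity of the Kobayashi curvature of $F^*$.

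The main obstacle will be the normal-coordinate identification between $G_{\alpha\bar\beta}(z_0,\zeta_0)$ and the endomorphism $R_{\alpha\bar\beta}$ appearing in (\ref{local for koba}): it is this identification that makes the constant section $\phi$ simultaneously extremal for all complex tangent directions $v\in T^{1,0}_{z_0}X$, whereas Lemma \ref{lem inf} a priori only supplies an extremizer separately for each individual $v$. Once this normal-frame computation is carried out, the strict positivity of the Kobayashi curvature of $F$ transfers directly into strong plurisubharmonicity of the lower touching function, and so into negativity of the Kobayashi curvature of $F^*$.
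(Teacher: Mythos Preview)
Your forward direction is essentially correct but takes a different route from the paper. The paper first perturbs $F$ by a small Hermitian metric to make it \emph{strongly} convex, then invokes Lemma~\ref{lem signature} to recast Kobayashi positivity as transversal Levi signature $(r,n)$, applies \cite[Theorem~2.5]{DemaillyMSRI} as a black box to get that $F^*$ is strongly plurisubharmonic on $E^*\setminus\{0\}$, and finishes with Lemma~\ref{neg char}. Your touching-from-below argument via Lemma~\ref{lem inf} and the normal-frame extremizer is a legitimate alternative that avoids the Levi-signature detour; the price is that you must still justify smoothness of $F^*$, and for that the Legendre/implicit-function step you sketch really needs strong convexity, not merely convexity plus $(F_{i\bar j})>0$ --- strong pseudoconvexity controls only the complex Hessian, not the full real Hessian. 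The paper's small Hermitian perturbation is exactly what buys this.

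Your reverse implication, however, has a genuine gap. The bipolar identity $F^{**}=F$ does not let you ``apply the forward implication to $F^*$'': the forward implication you proved reads ``convex Kobayashi \emph{positive} $\Rightarrow$ dual is convex Kobayashi \emph{negative}'', so feeding it a Kobayashi \emph{negative} $F^*$ yields nothing. Nor does your touching argument dualize: starting from a Kobayashi negative $F^*$ on $E^*$ and writing $\log F(z,\phi(z))\ge \log|\langle\psi,\phi\rangle|-\log F^*(z,\psi(z))$, the right-hand side now has complex Hessian equal to (pluriharmonic) minus (strongly plurisubharmonic), hence is strictly \emph{negative}; a lower barrier with negative Hessian gives no information about the Hessian of $\log F(\phi)$, and in any case Kobayashi positivity of $F$ asks for the \emph{infimum} over $\phi$ to be negative, which an inequality from below cannot deliver. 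The paper handles this direction by a separate argument: Lemma~\ref{neg char} turns Kobayashi negativity of $F^*$ into strong plurisubharmonicity on $E^*\setminus\{0\}$, the \emph{converse} direction of \cite[Theorem~2.5]{DemaillyMSRI} then gives Levi signature $(r,n)$ for the bidual, and Lemma~\ref{lem signature} closes the loop. The two directions are genuinely asymmetric at the level of your touching inequality, and it is precisely the two-sided nature of Demailly's theorem that the paper exploits.
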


Before proving Lemma \ref{sommese}, we need another lemma first. This lemma has appeared in Proposition 1.8 in the first version of \cite{FengLiuWan} on the arXiv. We give a simplified proof here. 

\begin{lemma}\label{lem signature}
A Finsler metric $F$ has transversal Levi signature $(r,n)$ if and only if $F$ is strongly pseudoconvex and has positive Kobayashi curvature.
\end{lemma}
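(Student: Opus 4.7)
The plan is to reduce the equivalence to a pointwise linear algebra computation via the normal coordinates (\ref{normal}). Fix $(z_0,\zeta_0)\in E$ with $\zeta_0\neq 0$, and choose a holomorphic frame for $E$ around $z_0$ in which $G_\alpha(z_0,\zeta_0)=0$ and $G_{\alpha\bar j}(z_0,\zeta_0)=0$. In this frame the mixed base-fiber entries of the complex Hessian of $G=F^2$ vanish at $(z_0,\zeta_0)$, so the Hessian becomes block diagonal,
\[
\partial\bar\partial G(z_0,\zeta_0)=
\begin{pmatrix} G_{\alpha\bar\beta}(z_0,\zeta_0) & 0\\ 0 & G_{i\bar j}(z_0,\zeta_0) \end{pmatrix},
\]
and the transversal Levi signature at the point is just the concatenation of the signatures of the two blocks. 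The whole lemma then reduces to identifying each block with one of the two hypotheses.

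For the fiber block, the remark following the definition of strong pseudoconvexity gives $(G_{i\bar j})>0\Leftrightarrow (F_{i\bar j})>0$, so positive definiteness of the fiber block (contributing $r$ positive directions) is equivalent to $F$ being strongly pseudoconvex. For the base block I would appeal to Lemma \ref{lem inf}. A direct chain rule computation of $\partial\bar\partial\log G(z,\phi(z))|_{z_0}$ along a local holomorphic section $\phi$ with $\phi(z_0)=\zeta_0$, using $G_\alpha=G_{\alpha\bar j}=0$ at the point, decomposes this Hermitian form into the $\phi$-independent term $G_{\alpha\bar\beta}(z_0,\zeta_0)/G(z_0,\zeta_0)$ plus a non-negative quadratic in the Jacobian of $\phi$ controlled by the matrix $(G\,G_{i\bar j}-G_iG_{\bar j})$. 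Once strong pseudoconvexity is assumed, (\ref{summmmm}) together with Cauchy--Schwarz shows this latter matrix is positive semidefinite, and the constant section $\phi\equiv\zeta_0$ in the normal trivialization kills the Jacobian term. Taking the infimum as in Lemma \ref{lem inf} then yields $K_v(\zeta_0)=-G_{\alpha\bar\beta}(z_0,\zeta_0)v^\alpha\bar v^\beta/G(z_0,\zeta_0)$, so positive Kobayashi curvature is equivalent to the base block being negative definite, contributing exactly $n$ negative directions.

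Putting the two blocks together, transversal Levi signature $(r,n)$ at $(z_0,\zeta_0)$ forces the fiber block to be positive definite (all $r$ positive directions must fit inside the $r$-dimensional fiber block) and the base block to be negative definite (similarly for the $n$ negative directions on the $n$-dimensional base block), and the argument reverses. The main subtlety I expect is the coupling between the two conditions in the base-block identification: the clean formula for $K_v$ in terms of $G_{\alpha\bar\beta}$ genuinely requires strong pseudoconvexity in order to kill the Jacobian-dependent quadratic term, so in the direction ``signature $(r,n)\Rightarrow$ Kobayashi positive'' one has to first extract strong pseudoconvexity from the fiber block and only then exploit the base block. After that, the argument is bookkeeping of eigenvalues.
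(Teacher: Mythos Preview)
Your approach is essentially correct, but two points need tightening before it is a proof. First, the transversal Levi signature condition is stated for the Levi form of $F$, not of $G=F^2$; you compute the block decomposition of $\partial\bar\partial G$. In normal coordinates both forms are block diagonal with the same block signs (since $F_{\alpha\bar\beta}=\tfrac12 G^{-1/2}G_{\alpha\bar\beta}$ and $F_{\alpha\bar j}=0$ there, and $(F_{i\bar j})>0\Leftrightarrow (G_{i\bar j})>0$), so this is harmless, but you should say so. Second, your sentence ``all $r$ positive directions must fit inside the $r$-dimensional fiber block'' is not a valid argument: a block-diagonal Hermitian form of overall signature $(r,n)$ need not have definite blocks. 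The definition of transversal Levi signature already \emph{gives} that $i\partial\bar\partial F$ is positive definite along the fiber, so $(F_{i\bar j})>0$ is immediate---and this is what licenses the use of normal coordinates in the first place. Once the fiber block is known positive and the form is block diagonal, the existence of a transversal $n$-plane $W$ on which $i\partial\bar\partial F<0$ does force $(F_{\alpha\bar\beta})<0$: for $v+w\in W$ with $v$ horizontal and $w$ vertical one has $F_{\alpha\bar\beta}v^\alpha\bar v^\beta+F_{i\bar j}w^i\bar w^j<0$, and the second term is nonnegative, while the projection $W\to$ horizontal is an isomorphism. With these fixes your argument goes through.

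Compared to the paper, your backward direction is essentially the same (normal coordinates, then identify the base block with the Kobayashi curvature), except that you derive $K_v(\zeta_0)=-G_{\alpha\bar\beta}v^\alpha\bar v^\beta/G$ self-containedly via Lemma~\ref{lem inf} rather than citing the formulas in \cite{ComplexFinsler}. The forward direction is genuinely different: the paper does \emph{not} use normal coordinates there but instead pushes $W$ down to $P(E)$ via $\pi:E\setminus\{0\}\to P(E)$, shows $\Theta(h_G)>0$ on $\pi_*W$ directly from $i\partial\bar\partial F<0$ on $W$, and then invokes Lemmas~\ref{fiber neg} and~\ref{K positive} to conclude signature $(n,r-1)$ for $\Theta(h_G)$. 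Your route is more elementary and uniform (one computation handles both directions), at the cost of the bookkeeping above; the paper's route is more geometric and avoids the explicit infimum computation.
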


\begin{proof}
Recall that a Finsler metric $F$ is said to have transversal Levi signature $(r,n)$, if at every point $(z,\zeta)\in E\setminus \{\text{zero section}\}$, the Levi form $i\partial\bar{\partial}F$ on $E$ is positive definite along the fiber $E_z$ and negative definite on some $n$-dimensional subspace $W\subset T^{1,0}_{(z,\zeta)}E$ which is transversal to the fiber $E_z$.

For the forward direction, since $F_{i\bar{j}}>0$, the Finsler metric $F$ is strongly pseudoconvex. Let $\pi:E\setminus\{\text{zero section}\}\to P(E)$ be the quotient map. As before, we denote $F^2$ by $G$, and the corresponding Hermitian metric on $O_{P(E)}(-1)$ by $h_G$. It is not hard to verify that $\pi^*\Theta(h_G)=-\partial \bar{\partial }\log G$. For any nonzero $e\in W$,
\begin{align*}
 \Theta(h_G)(\pi_*e,\overline{\pi_*e})=\pi^*\Theta(h_G)(e,\overline{e})
 =-2\partial \bar{\partial }\log F(e,\overline{e})\\
 =-2(\frac{\partial \bar{\partial }F}{F}-\frac{\partial F\wedge \bar{\partial}F }{F^2})(e,\overline{e})\geq -2\frac{\partial \bar{\partial }F}{F}(e,\overline{e})>0. 
\end{align*}
As a consequence, the map $\pi_*|_W$ is injective. So the space $\pi_*W$ is of dimension $n$, and $\Theta(h_G)$ is positive definite on $\pi_*W$. By Lemma \ref{fiber neg} and a dimension count, $\Theta(h_G)$ has signature $(n,r-1)$, thus $F$ is Kobayashi positive by Lemma \ref{K positive}.

For the backward direction, we fix a point $(z,\zeta)\in E\setminus \{\text{zero section}\}$. Since $F$ is strongly pseudoconvex, the Levi form $i\partial\bar{\partial}F$ on $E$ is positive definite along the fiber $E_z$. Moreover, we can find a holomorphic frame $\{e_i\}$ for $E$ around $z\in X$ such that 
\begin{equation}\label{normal 1}
  G_{\alpha}(z,\zeta)=G_{\alpha\bar{j}}(z,\zeta)=0,
\end{equation} namely the normal coordinates as in (\ref{normal}). With respect this coordinate system, the tangent space $T^{1,0}_{(z,\zeta)}E$ has a basis $\{ \partial/\partial z_1,\cdots,\partial/\partial z_n, \partial/\partial \zeta_1,\cdots, \partial/\partial \zeta_r  \}$, and the space $E_z=T^{1,0}_\zeta E_z$ has a basis $\{\partial/\partial \zeta_1,\cdots, \partial/\partial \zeta_r  \}$. We choose $W$ to be the space generated by $\{ \partial/\partial z_1,\cdots, \partial/\partial z_n  \}$ which is transversal to $E_z$. 

Let us verify that $i\partial\bar{\partial}F$ is negative definite on $W$. A straightforward computation gives
\begin{align*}
\partial\bar{\partial}F=\partial\bar{\partial} \sqrt{G}=\frac{1}{2} G^{-\frac{1}{2}}\partial\bar{\partial} G   -\frac{1}{4} G^{-\frac{3}{2}}\partial G\wedge \bar{\partial}G. 
\end{align*}
Using (\ref{normal 1}), we get 
\begin{equation*}
    \partial\bar{\partial}F(\frac{\partial}{\partial z_\alpha}, \frac{\partial}{\partial \bar{z}_\beta})= \frac{1}{2} G^{-\frac{1}{2}} G_{\alpha \bar{\beta}}   -\frac{1}{4} G^{-\frac{3}{2}}G_\alpha G_{\bar{\beta}}=\frac{1}{2} G^{-\frac{1}{2}} G_{\alpha \bar{\beta}};
\end{equation*}
the matrix $(G_{\alpha\bar{\beta}})$ in the end differs from the Kobayashi curvature of $F$ by a negative sign (\cite[Formulas (5.10) and (5.16)]{ComplexFinsler}). Since $F$ is Kobayashi positive, the form $i\partial\bar{\partial}F$ is negative definite on $W$, and hence $F$ has transversal Levi signature $(r,n)$.
\end{proof}

\begin{proof}[Proof of Lemma \ref{sommese}]

Assume the bundle $E$ carries a convex and strongly pseudoconvex Finsler metric $F$ with positive Kobayashi curvature. By adding a small Hermitian metric, we may assume the Finsler metric $F$ is strongly convex. By Lemma \ref{lem signature}, the Finsler metric $F$ has transversal Levi signature $(r,n)$.  According to \cite[Theorem 2.5]{DemaillyMSRI}, the dual Finsler metric of $F$ is strongly plurisubharmonic on $E^*\setminus \{0\}$; in particular, it is strongly pseudoconvex. Therefore, the dual Finsler metric of $F$ has negative Kobayashi curvature by Lemma \ref{neg char}. Since the dual metric is already convex, we conclude that the bundle $E^*$ is convex Kobayashi negative.

For the other direction, we assume the dual bundle $E^*$ carries a convex, strongly pseudoconvex Finsler metric $F^*$ whose Kobayashi curvature is negative. By adding a small Hermitian metric, we may assume the Finsler metric $F^*$ is strongly convex. By Lemma \ref{neg char}, the Finsler metric $F^*$ is strongly plurisubharmonic on $E^* \setminus \{0\}$. By \cite[Theorem 2.5]{DemaillyMSRI}, the dual of $F^*$ has transversal Levi signature $(r,n)$. Therefore, by Lemma \ref{lem signature}, the dual of $F^*$ is strongly pseudoconvex and has positive Kobayashi curvature. As a result, the bundle $E$ is convex Kobayashi positive.    
\end{proof}

The next lemma is from \cite[Corollary 2]{wu_2022}, and almost all our arguments start with this lemma. It basically says that when dealing with Kobayashi positive Finsler metrics, we have convexity for free.

\begin{lemma}\label{convex}
If $E$ is Kobayashi positive, then $E$ is convex Kobayashi positive. Therefore, the dual bundle $E^*$ is convex Kobayashi negative by Lemma \ref{sommese}.
\end{lemma}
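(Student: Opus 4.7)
The statement is precisely \cite[Corollary 2]{wu_2022}, so my plan is to invoke that result for the first assertion and then read off the second from Lemma \ref{sommese}. Given the centrality of this lemma to the present paper, however, let me sketch the idea behind the convexity upgrade rather than treat it as a pure black box.

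Given a strongly pseudoconvex Kobayashi positive Finsler metric $F$ on $E$, the goal is to produce a new Finsler metric $\widehat F$ on $E$ that is convex while retaining strong pseudoconvexity and positive Kobayashi curvature. A natural candidate is the fiberwise convex envelope: for each $z\in X$, let $\widehat F(z,\cdot)$ be the Minkowski functional of the convex hull $\widehat B_z$ of the fiber unit ball $B_z=\{\zeta\in E_z:F(z,\zeta)<1\}$. By construction, $\widehat F$ is continuous, positively homogeneous, fiberwise convex, and dominated by $F$. The next step is to verify that $\widehat F$ is still Kobayashi positive in the continuous sense reflected by Lemma \ref{lem inf}, using that holomorphic sections realizing the infimum defining $K_v(\zeta)$ for $F$ can be converted into usable test sections for $\widehat F$, and that upper envelopes behave well under $\log$-plurisubharmonicity.

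The main obstacle, and the real substance of \cite[Corollary 2]{wu_2022}, is to regularize the a priori only continuous convex envelope $\widehat F$ back to a smooth, strongly pseudoconvex Finsler metric on $E$ while retaining a positive lower bound on the Kobayashi curvature. This combines Greene--Wu type smoothing (of the kind developed also in Subsection \ref{sec regularize} of the present paper, drawing on \cite{GreeneWu2,GreeneWu}) with a small perturbation by a convex Hermitian metric to restore strong pseudoconvexity, exploiting that Kobayashi positivity is an open condition. Once $E$ is shown to be convex Kobayashi positive, the dual assertion that $E^*$ is convex Kobayashi negative is a direct application of Lemma \ref{sommese}.
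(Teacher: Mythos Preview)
Your proposal is correct and matches the paper exactly at the formal level: the paper gives no proof of this lemma, it simply records that the first sentence is \cite[Corollary~2]{wu_2022} and the second follows from Lemma~\ref{sommese}, which is precisely how you open.

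Your added sketch, however, goes beyond the paper and is not quite on target. The argument in \cite{wu_2022} does not proceed by taking a fiberwise convex envelope of $F$; it builds a new convex Finsler metric out of $L^2$-type (direct image) Hermitian metrics on high symmetric powers $S^mE$, pulled back through the Veronese embedding, using positivity of direct images \`a la Berndtsson. In your sketch the delicate step ``verify that $\widehat F$ is still Kobayashi positive'' is the whole difficulty, and the reasoning you indicate is not right: the convex envelope is a \emph{lower} envelope (an infimum over supporting affine/linear functionals), not an upper envelope, so the remark that ``upper envelopes behave well under $\log$-plurisubharmonicity'' does not apply; and since $\widehat F\le F$, a section $\phi$ witnessing $\partial\bar\partial\log F^2(\phi)<0$ at a point says nothing direct about $\partial\bar\partial\log\widehat F^2(\phi)$. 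None of this affects the validity of your proof of the lemma as stated here, since the citation suffices, but if you keep the sketch you should either replace it by the $S^mE$/direct-image outline or drop it.
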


The next lemma is from  \cite[Theorem 6.1]{Negfinsler}. One should notice that a ``convex'' Finsler metric in \cite{Negfinsler} is what we call ``strongly pseudoconvex'' here. This lemma can also be proved using Lemma \ref{neg char}. 
\begin{lemma}\label{subbundle}
Assume $E$ carries a convex and strongly pseudoconvex Finsler metric $F$ whose Kobayashi curvature is negative. If $E_1$ is a subbundle of $E$, then the restriction of the Finsler metric $F$ to $E_1$ is still convex, strongly pseudoconvex, and Kobayashi negative.    
\end{lemma}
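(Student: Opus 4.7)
The plan is to exploit Lemma \ref{neg char}, which converts Kobayashi negativity of a strongly pseudoconvex Finsler metric into a property—strong plurisubharmonicity of $F$ on $E\setminus\{0\}$—that transparently restricts to complex submanifolds.

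First I would verify the two pointwise conditions. Convexity is fiberwise: the restriction of a convex function on $E_z$ to the linear subspace $(E_1)_z$ is convex, so $F|_{E_1}$ is convex on each fiber $(E_1)_z$. Strong pseudoconvexity is also a fiberwise linear-algebra statement: choosing a local holomorphic frame $\{e_1,\dots,e_r\}$ of $E$ whose first $r_1$ vectors span $E_1$, the matrix $((F|_{E_1})_{i\bar{j}})$ in the fiber coordinates $(\zeta_1,\dots,\zeta_{r_1})$ is the principal $r_1\times r_1$ submatrix of $(F_{i\bar{j}})$, and hence is positive definite on $E_1\setminus\{0\}$.

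Next I would address Kobayashi negativity. Because $F$ has negative Kobayashi curvature and is strongly pseudoconvex, Lemma \ref{neg char} gives that $F$ is strongly plurisubharmonic on $E\setminus\{0\}$. Now $E_1\setminus\{0\}$ is a complex submanifold of $E\setminus\{0\}$, so the restriction $F|_{E_1}$ remains strongly plurisubharmonic on $E_1\setminus\{0\}$ (the complex Hessian of a restriction is the restriction of the complex Hessian to the holomorphic tangent subspace). Combined with the strong pseudoconvexity of $F|_{E_1}$ established in the previous step, applying Lemma \ref{neg char} in the reverse direction to $F|_{E_1}$ on $E_1$ yields that $F|_{E_1}$ has negative Kobayashi curvature.

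There is essentially no obstacle here: the whole argument hinges on having Lemma \ref{neg char} already in place, which decouples Kobayashi negativity from the projectivized bundle formalism (where compatibility with subbundles is not manifest) and rephrases it as a property of $F$ itself as a function on the total space. The only thing that needs a moment of care is the bookkeeping of frames to see that strong pseudoconvexity and convexity pass to $E_1$; both are routine.
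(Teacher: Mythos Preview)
Your proof is correct. The paper does not actually prove this lemma; it attributes the result to \cite[Theorem 6.1]{Negfinsler} and remarks that ``this lemma can also be proved using Lemma \ref{neg char},'' which is precisely the route you take.
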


\subsection{Regularization of continuous Finsler metrics}\label{sec regularize}

Recall that in the definition of Finsler metrics, we include the smoothness of $F$ on $E\setminus \{0\}$, so for us Finsler metrics are smooth. If we drop the smoothness assumption, then we call $F$ a continuous Finsler metric. In this subsection, we will show that continuous Finsler metrics with extra properties can be approximated by Finsler metrics with the same properties.

First of all, the correspondence (\ref{corres}) can be extended to the continuous case; namely, there is a one-to-one correspondence between continuous Finsler metrics on $E$ and continuous Hermitian metrics on $O_{P(E)}(-1)$.

We fix a background Finsler metric $F$ on $E$, and denote $F^2$ by $G$. We denote the corresponding Hermitian metric on $O_{P(E)}(-1)$ by $h$, so according to (\ref{corres}) we have $h(z, [\zeta],\zeta)=G(z,\zeta)$. Every continuous Hermitian metric on the line bundle $O_{P(E)}(-1)$ can be written as $h$ times a positive continuous function $m(z,[\zeta])$ on $P(E)$. Hence, through the one-to-one correspondence, every continuous Finsler metric on $E$ can be written as $G(z,\zeta) m(z, [\zeta])$ where $m(z,[\zeta])$ is a positive continuous function on $P(E)$. We will simply write $Gm$ or $Gm (z,\zeta)$ for $G(z,\zeta) m(z, [\zeta])$. (Strictly speaking, we should use $(Gm)^{1/2}$ for Finsler metric, but we abuse the notation by using $Gm$.)

For an open set $U$ in $P(E)$ and a positive continuous function $m(z,[\zeta])$ defined on $U$, the continuous Finsler metric $Gm$ is defined on some open subset of $E$. We call $Gm$ strongly convex if for $\zeta_0$ in $E_{z_0}$, there exist a neighborhood in $E$ and a local Hermitian metric $H$ on $E$ such that $\zeta \mapsto Gm(z,\zeta) -H(\zeta, \zeta)$ is convex in the neighborhood of $(z_0,\zeta_0)$ in $E$. When $m(z,[\zeta])$ is smooth, this definition of strong convexity is consistent with the earlier definition in Subsection \ref{subsec finsler}. On the other hand, $Gm$ is called strongly plurisubharmonic on the total space $E\setminus \{0\}$ if for $\zeta_0\in E_{z_0}$ there exist a coordinate neighborhood in $E$ and a positive number $c$ such that $Gm(z,\zeta)-c(\|z\|^2+\|\zeta\|^2)$ is plurisubharmonic in the neighborhood of $(z_0,\zeta_0)$ in $E$ where $\|z\|^2=\sum_\alpha |z_\alpha|^2$ and $ \|\zeta\|^2=\sum_j|\zeta_j|^2$.

We define a presheaf $S$ on $P(E)$ as follows. For $U$ an open set in $P(E)$, $S(U)$ is set to be the collection of all positive continuous functions $m(z,[\zeta])$ on $U$ such that $Gm(z,\zeta) $ is strongly plurisubharmonic and strongly convex (in the sense of the previous paragraph). It is not hard to see that $S$ satisfies the gluing axioms for sheaves, so $S$ is a sheaf.

We are going to use the following theorem from \cite[Theorem 4.1 and Corollary 1]{GreeneWu}.
\begin{theorem}\label{greenewu}
If $T$ is a subsheaf of the sheaf of germs of continuous functions on $P(E)$ and if $T$ has the local approximation, the $C^{\infty}$ stability, and the maximum closure properties, then the smooth global sections of $T$ are dense in the global sections of $T$ in the $C^0$ fine topology.    
\end{theorem}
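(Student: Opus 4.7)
The plan is to follow the classical Greene--Wu strategy: pass from the continuous global section to local smooth approximants, then glue those approximants into a global smooth section through a regularized maximum that stays inside $T$. Given a global continuous section $s \in T(P(E))$, the local approximation property yields, for every $x \in P(E)$, a neighborhood $U_x$ and a smooth element of $T(U_x)$ which is as $C^0$-close to $s|_{U_x}$ as prescribed. Passing to a countable locally finite refinement $\{U_i\}$, I obtain smooth $s_i \in T(U_i)$ with controlled tolerance on each piece.

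The central step is to combine the $s_i$ into a global smooth section of $T$. I would use a Richberg-style regularized maximum $M_\eta$, a smooth convex function of finitely many variables that agrees with the ordinary $\max$ outside an $\eta$-neighborhood of the diagonal. Before applying $M_\eta$, I would perturb each $s_i$ by a small smooth bump supplied by the $C^\infty$ stability: a compactly supported smooth function that is slightly positive on a shrunken core $V_i$ relatively compact in $U_i$ and strongly negative near $\partial U_i$. Tuning these perturbations against the locally finite cover ensures that at each point only finitely many indices with $x \in V_i$ can attain the maximum, so the regularized maximum $\tilde s = M_\eta(s_{i_1},\ldots,s_{i_k})$ is smooth and coincides locally with whichever $s_i$ strictly dominates. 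The maximum closure property then keeps $\tilde s$ inside $T$, and the local constructions patch into a global smooth section.

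For density in the $C^0$ fine topology, given a continuous positive tolerance $\epsilon$ on $P(E)$, I would shrink the local approximation errors and the smoothing parameter $\eta$ in a locally finite manner so that $|\tilde s - s| < \epsilon$ pointwise. The locally finite nature of the cover reduces the fine-topology estimate to finitely many pointwise estimates near each point, each of which is controlled by construction.

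The main obstacle will be arranging the perturbations so that, at each point, exactly one input genuinely dominates, since $M_\eta$ disagrees with $\max$ near the diagonal and could otherwise mix inputs in a way that leaves the subsheaf $T$. The natural remedy is an inductive construction over the cover: process the $U_i$ one at a time, at each step using $C^\infty$ stability to add a perturbation that forces the new $s_i$ to win on its core $V_i$ while leaving previously arranged regions undisturbed. This is the step where all three hypotheses of the theorem truly cooperate, and is where I expect the bulk of the bookkeeping to live.
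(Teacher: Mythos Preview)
The paper does not prove this theorem. It is quoted from \cite[Theorem 4.1 and Corollary 1]{GreeneWu} and used as an off-the-shelf tool in the proofs of Lemmas~\ref{for sheaf S} and~\ref{for sheaf S'}; the paper's own contribution is to verify that the particular sheaves $S$ and $S'$ satisfy the three hypotheses. There is therefore no in-paper argument to compare your proposal against.

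That said, your outline does follow the Greene--Wu architecture: local smooth approximants, inductive gluing by a regularized maximum, perturbations absorbed by $C^\infty$ stability, and membership in $T$ maintained via maximum closure. You also correctly isolate the genuine difficulty, namely that $M_\eta$ is not the ordinary maximum, so maximum closure by itself does not place $M_\eta(s_{i_1},\dots,s_{i_k})$ in $T$. Your proposed remedy, however---arranging that ``exactly one input genuinely dominates'' at every point---cannot succeed: the entire purpose of the patching is to pass from one $s_i$ to another across overlaps, so transition zones where two inputs are within $\eta$ of each other are unavoidable. Greene and Wu resolve this through a more intricate interplay of the three hypotheses on nested compacta, and that step is the technical heart of their paper. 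If you intend to reproduce the result rather than cite it, the transition-zone argument must be extracted from \cite{GreeneWu}; the heuristic in your last paragraph does not yet close the gap it correctly flags.
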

Let us explain the terminologies (they can be found in \cite[Definitions 1.1, 1.3, and 1.4]{GreeneWu}). The $C^0$ fine topology on the set of continuous functions on $P(E)$ is the topology generated by the sets $$\{ \text{$u$ continuous functions on $P(E)$} : |f-u|<g \text{ on } P(E) \}$$
where $f,g$ are both continuous on $P(E)$ and $g$ is positive.

The subsheaf $T$ is said to have the local approximation property if each point in $P(E)$ has a neighborhood $U$ such that $T|_U$ has the semilocal approximation property. For the precise definition of the semilocal approximation, see \cite[Definition 1.4]{GreeneWu}; roughly speaking, it means that local sections of $T$ can be approximated by smooth sections of $T$ in a neighborhood of a given compact set.

The subsheaf $T$ is said to have the $C^{\infty}$ stability property if the following holds. Suppose that $U$ is an open set in $P(E)$, $K$ is a compact set in $U$, and $f$ is a real-valued function on $U$ such that the germ $\{f\}_{(z,[\zeta])}$ at $(z,[\zeta])$ is in the stalk $T_{(z,[\zeta])}$ for every point $(z,[\zeta])$ in $U$. Then there exists $\varepsilon>0$ such that every real-valued function $g$ smooth in a neighborhood of $K$ with $d_K(0,g)<\varepsilon$ satisfies $\{f+g\}_{(z,[\zeta])}\in T_{(z,[\zeta])}$ for $(z,[\zeta])$ in $K$. Here $d_K(0,g)$ is 
$$d_K(0,g)=\sup_K |g|+\sum_{i=1}^\infty \frac{1}{2^i}\min (1, \|g\|_{K,i}),$$
where $\|g\|_{K,i}$ is the sup of $i$-th partial derivatives of $g$ computed in some coordinate systems covering $K$ (see the first displayed formula in \cite[page 49]{GreeneWu}).

The subsheaf $T$ is said to have the maximum closure property if for any two germs $\{f\}_{(z,[\zeta])},\{g\}_{(z,[\zeta])}$ in $T_{(z,[\zeta])}$, the germ $\{\max(f,g)\}_{(z,[\zeta])}$ is in $T_{(z,[\zeta])}$.

For our sheaf $S$, we have the following lemma. In other words, this lemma says that a strongly plurisubharmonic, strongly convex, continuous Finsler metric can be approximated by the smooth ones.
\begin{lemma}\label{for sheaf S}
   The smooth global sections of $S$ are dense in the global sections of $S$ in the $C^0$ fine topology.
\end{lemma}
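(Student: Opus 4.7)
My plan is to apply Theorem~\ref{greenewu} to the sheaf $S$, which requires verifying the three hypotheses there: local approximation, $C^{\infty}$ stability, and maximum closure. The maximum closure property is immediate: for germs $m_1,m_2\in S_{(z_0,[\zeta_0])}$, the function $m:=\max(m_1,m_2)$ is positive and continuous, and because $G>0$ we have $Gm=\max(Gm_1,Gm_2)$; the maximum of two strongly plurisubharmonic functions is strongly plurisubharmonic, and the maximum of two strongly convex functions is strongly convex (each with the smaller of the two strong constants), so $m\in S_{(z_0,[\zeta_0])}$.

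For $C^{\infty}$ stability I would use that ``strongly plurisubharmonic'' and ``strongly convex'' are quantitative, hence open, conditions in the $C^2$ topology. Given $m\in S(U)$ and compact $K\subset U$, a compactness argument yields a single positive constant $c$ and a fixed local Hermitian form $H$ on $E$ with respect to which the defining inequalities for $Gm$ hold uniformly on some neighborhood of $K$. A smooth $g$ with $d_K(0,g)$ sufficiently small has small first and second derivatives, so $Gg$ has small $C^2$-norm there; hence $G(m+g)=Gm+Gg$ continues to satisfy both strong conditions with slightly weakened constants, and $m+g$ remains positive by $C^0$-smallness.

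The substantive step is local approximation. Fix $p_0=(z_0,[\zeta_0])\in P(E)$ and choose a coordinate neighborhood $V\subset P(E)$ of $p_0$ in which $E$ is simultaneously trivialized; write $(z,w)$ for coordinates on $V$ and $(z,\zeta)$ for the induced coordinates on $E$. For $m\in S(V)$ and compact $K\subset V$, I would mollify $m$ by convolution with a smooth positive kernel $\rho_{\varepsilon}$ in the $(z,w)$-variables, obtaining a smooth positive function $m_{\varepsilon}$ on a neighborhood of $K$ with $m_{\varepsilon}\to m$ uniformly. The task is then to verify that $Gm_{\varepsilon}$ remains strongly plurisubharmonic and strongly convex on the corresponding open subset of $E$ for $\varepsilon$ small. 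Heuristically, since convolution with smooth positive mollifiers preserves both strong plurisubharmonicity and strong convexity (with nearly the same quantitative constants) on a flat domain, and since multiplication by the smooth factor $G$ contributes only a controlled correction to the complex and real Hessians, the strict inequalities should pass from $Gm$ to $Gm_{\varepsilon}$ with only a mild weakening of constants.

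The main obstacle I foresee is making this last verification precise: the convolution takes place downstairs on $P(E)$ while the strong conditions live upstairs on $E$, so one must transfer strict Hessian lower bounds through the $\mathbb{C}^{*}$-bundle $E\setminus\{0\}\to P(E)$ while keeping track of the contribution of the smooth factor $G$ and of the change of variables from $w$ to $\zeta$. Once all three Greene--Wu properties are in hand, Theorem~\ref{greenewu} immediately delivers the density of smooth global sections of $S$ in the $C^0$ fine topology, which is the desired conclusion.
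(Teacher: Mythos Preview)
Your overall architecture is right: apply Theorem~\ref{greenewu} by checking the three Greene--Wu properties, and your treatments of maximum closure and $C^\infty$ stability match the paper's in substance. The genuine gap is exactly where you suspected it would be.

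Mollifying $m$ itself does not give you control over $Gm_\varepsilon$. The difficulty is structural, not merely technical: strong plurisubharmonicity and strong convexity are properties of $Gm$ as a function of $(z,\zeta)$, not of $m$ as a function of $(z,w)$. Writing $Gm_\varepsilon(z,\zeta)=\int G(z,\zeta)\,m(z-s,w(\zeta)-t)\,\rho_\varepsilon\,ds\,dt$, the integrand shifts $m$ but not $G$, so there is no reason for $G(z,\zeta)m(z-s,w-t)$ to be plurisubharmonic in $(z,\zeta)$ or convex in $\zeta$; your heuristic that ``multiplication by the smooth factor $G$ contributes only a controlled correction'' presupposes useful Hessian bounds on $m$ alone, which you do not have.

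The paper's fix is to convolve not $m$ but $h(e)m$, where $e$ is a local holomorphic frame of $O_{P(E)}(-1)$ and $h$ is the Hermitian metric corresponding to $G$. Setting $m_\delta=\big((h(e)m)*\rho_\delta\big)/h(e)$, one can unwind the integral to obtain
\[
Gm_\delta(z,\zeta)=\int Gm\Big(z-s,\ \zeta-\zeta_r\sum_{i=1}^{r-1}t_i e_i\Big)\,\rho_\delta(s,t)\,ds\,dt,
\]
an average of $Gm$ precomposed with maps that are \emph{linear} in $\zeta$. Convexity of $Gm_\delta$ in $\zeta$ is then immediate from convexity of $Gm$. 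For plurisubharmonicity, $h(e)m$ is log-plurisubharmonic in $(z,w)$, convolution preserves log-plurisubharmonicity (via a Riemann-sum argument), and $\log Gm_\delta=\log|\zeta_r|^2+\log(h(e)m_\delta)$. Finally one adds $\delta H$ for a local Griffiths-negative Hermitian metric $H$ to upgrade ``plurisubharmonic and convex'' to the strict versions. The paper itself remarks that two nearby alternatives (convolving $\log(h(e)m)$, or convolving $Gm$ on $E$) do not appear to work, for reasons close to the ones obstructing your direct mollification of $m$.
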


\begin{proof}
Let us verify the three properties in Theorem \ref{greenewu} for $S$. For the maximum closure property, we consider two sections $m_1,m_2$ of $S$ defined in a neighborhood of some point in $P(E)$. The function $ \max(m_1,m_2)  $ is obviously positive and continuous. Also, we have $G\max(m_1,m_2)=\max(Gm_1, Gm_2)$. To show that $\max(Gm_1, Gm_2)$ is strongly convex, we fix $\zeta_0\in E_{z_0}$, then there exist local Hermitian metrics $H_1$ and $H_2$ on $E$ such that $Gm_1-H_1$ and $Gm_2-H_2$ are convex in a neighborhood of $(z_0,\zeta_0)$ in $E$. By choosing a small positive number $c$, we can make $Gm_1-cH_1$ and $Gm_2-cH_1$ convex in a neighborhood of $(z_0,\zeta_0)$ in $E$. So $\max(Gm_1-cH_1, Gm_2-cH_1)=\max(Gm_1,Gm_2)-cH_1$ is convex in a neighborhood of $(z_0,\zeta_0)$ in $E$. Hence, $\max(Gm_1, Gm_2)$ is strongly convex.  Showing that $\max(Gm_1, Gm_2)$ is strongly plurisubharmonic is similar, and we skip the details. As a result, the function $ \max(m_1,m_2)  $ is a section of $S$, and the sheaf $S$ has the maximum closure property. 

Since strong plurisubharmonicity and strong convexity are open conditions, the sheaf $S$ has the $C^\infty$ stability property. To be precise, let $U$ be an open set in $P(E)$, $K$ a compact set in $U$, and $m$ a section of $S$ over $U$. So, $m$ is a positive continuous function on $U$ such that $Gm$ is strongly plurisubharmonic and strongly convex. Let $H$ be a Hermitian metric on $E$ and $\widetilde{K}$ be the compact set $\{(z,\zeta)\in E : 1 \leq H(\zeta,\zeta) \leq 2  \}$. By strong convexity of $Gm$, there exist a finite open cover $\{V_j\}$ for $\widetilde{K}$ and finitely many local Hermitian metrics $H_j$ such that $Gm-H_j$ is convex in $V_j$ for each $j$. We can find $\varepsilon>0$ small such that for $g$ smooth in a neighborhood of $K$ with $d_K(0,g)<\varepsilon$,  $Gg+H_j/2$ is convex in $V_j$ for each $j$. Therefore, $$G(m+g)-H_j/2=Gm-H_j+Gg+H_j/2$$
is convex in $V_j$ for each $j$. Now for a fixed $(z,\zeta)\in E$ with $(z,[\zeta]) \in K$, we can find $\lambda \in \mathbb{C} $ to make $(z,\lambda\zeta)$ lie in the interior of $\widetilde{K}$ and hence in $V_j$ for some $j$; since $$G(m+g)(z,\lambda \zeta)-\frac{H_j}{2}(\lambda \zeta, \lambda \zeta)=|\lambda|^2 \big( G(m+g)(z,\zeta)-\frac{H_j}{2}(\zeta, \zeta) \big),$$ $G(m+g)-H_j/2$ is convex in a neighborhood of $(z,\zeta)$ in $E$. We have thus obtained the strong convexity for $G(m+g)$. For the strong plurisubharmonicity, the argument is similar, and we skip the details. 

For the local approximation property, we fix a point $(z_0,[\zeta_0])$ in $P(E)$ and consider a coordinate neighborhood; we want to verify that $S$ restricted to this coordinate neighborhood has the semilocal approximation property. 

We assume the coordinate neighborhood has the following coordinate system. Let $\{e_1,...,e_r\}$ be a holomorphic frame of $E$ around $z_0$ with fiber coordinates $(\zeta_1,...,\zeta_r)$. Around the point $(z_0,[\zeta_0])\in P(E)$, we define the local coordinates $(z_1,\ldots, z_n, w_1,\ldots, w_{r-1})$ by $w_i=\zeta_i/\zeta_r$ for $i=1\sim r-1$. So $$e:=\frac{\zeta_1e_1+\cdots+\zeta_re_r}{\zeta_r}=w_1e_1+\cdots+w_{r-1}e_{r-1}+e_r$$
is a holomorphic frame for $O_{P(E)}(-1)$.

Let $U$ be an open set in this coordinate neighborhood, and $m$ be a section of $S$ over $U$; namely, $m$ is a positive continuous function on $U$ such that $Gm$ is strongly plurisubharmonic and strongly convex. Especially, the continuous Hermitian metric $h(z,[\zeta],\zeta)m(z,[\zeta])$ is plurisubharmonic on the total space $O_{P(E)}(-1)$, so $$\log\big( h(z,[\zeta],e)m(z,[\zeta])\big)$$ is plurisubharmonic on $U$. We will abbreviate $h(z,[\zeta],e)m(z,[\zeta])$ as $(h(e)m)(z,w)$ or $h(e)m $. 

Let $\rho$ be a nonnegative radial smooth function with support in the unit ball that has integral one in $\mathbb{C}^{n+r-1}$. For $\delta>0$, we consider $\rho_\delta(\cdot)=\delta^{-2(n+r-1)}\rho(\cdot/\delta)$, and the convolution 
\begin{equation}\label{convolution}
  \big(h(e)m\big)*\rho_\delta.  
\end{equation}
The function $$m_\delta:=\frac{\big(h(e)m\big)*\rho_\delta}{ h(e)}$$ is positive and smooth. We claim that $Gm_\delta$ is plurisubharmonic. First, each term in the Riemann sum for the integral in the convolution $\big(h(e)m\big)*\rho_\delta$ is of the form $(h(e)m )(z-s, w-t )\rho_\delta(s,t)\Delta s \Delta t $ where $s,t$ are the sample points and $\Delta s \Delta t$ is the volume of a subinterval in the partition, and these terms are log plurisubharmonic as the function $(h(e)m)(z,w)$ is log plurisubharmonic. The logarithm of the Riemann sum which we denote by 
$$\log \big( \sum  (h(e)m) (z-s, w-t )\rho_\delta(s,t)\Delta s \Delta t   \big)=\log \big( \sum  e^{\log (h(e)m) (z-s, w-t )\rho_\delta(s,t)\Delta s \Delta t}   \big)$$ is therefore plurisubharmonic, and it converges to $\log   \big[(h(e)m)*\rho_\delta\big]=\log (h(e)m_\delta)$ which is plurisubharmonic as a result. Meanwhile, we have $$\log (Gm_\delta)(z, \zeta) = \log\big( |\zeta_r|^2  (h(e)m_\delta)(z,w)\big)=\log |\zeta_r|^2 +\log (h(e)m_\delta)(z,w), $$ where the last two terms are plurisubharmonic, so $Gm_\delta$ is plurisubharmonic as we claimed. 

Next we claim that, for $\zeta$ and $ \xi $ in $E_z$ and $0\leq \theta\leq 1$,  $Gm_\delta (z, \theta\zeta+(1-\theta)\xi)\leq \theta Gm_\delta (z, \zeta)+(1-\theta)Gm_\delta (z, \xi)$. Let us unwind the convolution first,
\begin{align*}
    &Gm_\delta(z, \zeta)= |\zeta_r|^2  (h(e)m_\delta) (z,w)\\
    =&|\zeta_r|^2 \int_{(s,t)\in \mathbb{C}^n\times\mathbb{C}^{r-1}} (h(e)m)(z-s,w-t) \rho_\delta(s,t)dsdt\\
    =&\int_{(s,t)\in \mathbb{C}^n\times\mathbb{C}^{r-1}}  h\big(z-s, \big[\zeta-\zeta_r\sum_{i=1}^{r-1}t_ie_i\big] ,\zeta-\zeta_r\sum_{i=1}^{r-1}t_ie_i\big)  m(z-s,w-t) \rho_\delta(s,t)dsdt\\
    =&\int_{(s,t)\in \mathbb{C}^n\times\mathbb{C}^{r-1}}  G\big(z-s, \zeta-\zeta_r\sum_{i=1}^{r-1}t_ie_i\big)  m(z-s,w-t) \rho_\delta(s,t)dsdt\\
    =&\int_{(s,t)\in \mathbb{C}^n\times\mathbb{C}^{r-1}}  Gm\big(z-s, \zeta-\zeta_r\sum_{i=1}^{r-1}t_ie_i\big)\rho_\delta(s,t)dsdt.
    \end{align*}
where $w=(w_1,...,w_{r-1})$ and $\zeta_i/\zeta_r=w_i$ for $i=1\sim r-1$, and $\{e_i\}$ is the holomorphic frame for $E$.  
Therefore, 
\begin{align*}
    &Gm_\delta (z, \theta\zeta+(1-\theta)\xi)\\
    =&\int_{(s,t)\in \mathbb{C}^n\times\mathbb{C}^{r-1}}  Gm\big(z-s, \theta\zeta+(1-\theta)\xi-(\theta\zeta_r+(1-\theta)\xi_r)\sum_{i=1}^{r-1}t_ie_i\big) \rho_\delta(s,t)dsdt\\
    =&\int_{(s,t)\in \mathbb{C}^n\times\mathbb{C}^{r-1}}  Gm\big(z-s,    
    \theta(\zeta -\zeta_r\sum_{i=1}^{r-1}t_ie_i   )       +(1-\theta)( \xi-\xi_r\sum_{i=1}^{r-1}t_ie_i)\big) \rho_\delta(s,t)dsdt\\    
    \leq&  \int_{(s,t)\in \mathbb{C}^n\times\mathbb{C}^{r-1}}  \theta Gm\big(z-s,    
    \zeta -\zeta_r\sum_{i=1}^{r-1}t_ie_i \big)+
    (1-\theta)Gm\big(z-s,    
     \xi-\xi_r\sum_{i=1}^{r-1}t_ie_i\big) \rho_\delta(s,t)dsdt\\
     =&\theta Gm_\delta (z, \zeta)+(1-\theta)Gm_\delta (z, \xi),
\end{align*}
where the inequality is due to the convexity of $Gm$. 

With the local frame $\{e_i\}$ for $E$, we can construct a local Hermitian metric $H$ on $E$ which is Griffiths negative (for example, $H(z,\zeta)=\|\zeta\|^2e^{\|z\|^2}$). So the Finsler metric $Gm_\delta+\delta H$ is strongly plurisubharmonic and strongly convex. The function $$\frac{Gm_\delta+\delta H}{G}(z,\zeta)$$     
is therefore a smooth section of the sheaf $S$, and it approximates $m$ in the sense of the conditions (a) and (b) in \cite[Definition 1.4]{GreeneWu} for $\delta$ small (with $f$ and $ F$ there replaced with $m$ and $ (Gm_\delta+\delta H)/G$ respectively). 

As a result, $S$ restricted to the coordinate neighborhood has the semilocal approximation property, and hence $S$ has the local approximation property. (One could consider the convolution $\big(\log (h(e)m)\big)*\rho_\delta$ instead of (\ref{convolution}) but we do not know how to prove convexity in this case. Also, taking the convolution of $Gm$ on the total space $E$ does not seem to work because the homogeneity of Finsler metrics is lost).

\end{proof}

There is another sheaf we will use later. Define a presheaf $S'$ on $P(E)$ as follows. For $U$ an open set in $P(E)$, $S'(U)$ is set to be the collection of all positive continuous functions $m(z,[\zeta])$ on $U$ such that $Gm(z,\zeta)$ is strongly convex (in the sense of continuous Finsler metrics); moreover, for each point in $X$, there exist a coordinate neighborhood and $\varepsilon>0$ such that for any local holomorphic section $A$ of $E$ in this coordinate system, the function 
\begin{equation}\label{AA}
  \log \big(Gm(z,A(z))\big)-\varepsilon\|z\|^2  
\end{equation}
is plurisubharmonic where $\|z\|^2=\sum_\alpha |z_\alpha|^2$ (we will simply write $\log Gm(A(z))$ later). Again, it is not hard to see that $S'$ is a sheaf.

\begin{lemma}\label{for sheaf S'}
   The smooth global sections of $S'$ are dense in the global sections of $S'$ in the $C^0$ fine topology.
\end{lemma}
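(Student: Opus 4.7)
The plan is to apply the Greene--Wu theorem (Theorem \ref{greenewu}) to the sheaf $S'$, following the blueprint of Lemma \ref{for sheaf S}. I must verify three properties: maximum closure, $C^\infty$ stability, and local approximation.

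For maximum closure, if $m_1, m_2 \in S'(U)$, then $G\max(m_1,m_2) = \max(Gm_1,Gm_2)$ is strongly convex by the same argument used in Lemma \ref{for sheaf S}; for a local holomorphic section $A$, the identity $\log\max(Gm_1(A),Gm_2(A)) = \max(\log Gm_1(A),\log Gm_2(A))$ together with the max-preserving property of plurisubharmonic functions shows that $\log G\max(m_1,m_2)(A(z)) - \min(\varepsilon_1,\varepsilon_2)\|z\|^2$ is plurisubharmonic. For $C^\infty$ stability, both defining conditions of $S'$ are open under small smooth perturbations; the condition along holomorphic sections is stable because a $C^2$-small perturbation of $m$ produces a $C^0$-small perturbation of the complex Hessian of $\log Gm(A(z))$, uniformly in $A$ on compact subsets.

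The novel content is local approximation. I would use the same convolution $m_\delta := ((h(e)m)*\rho_\delta)/h(e)$ as in Lemma \ref{for sheaf S}, which yields
$$Gm_\delta(z,\zeta) = \int Gm\bigl(z-s,\, \zeta - \zeta_r \textstyle\sum t_i e_i\bigr)\rho_\delta(s,t)\,ds\,dt.$$
Strong convexity of $Gm_\delta$ is proved by the identical computation as in Lemma \ref{for sheaf S}. For the condition along holomorphic sections, substitute $\zeta = A(z)$ and recognize the integrand as $Gm$ evaluated along the translated holomorphic section
$$C_{s,t}(u) := \textstyle\sum_{i=1}^{r-1}\bigl(A_i(u+s) - A_r(u+s)\,t_i\bigr)e_i(u) + A_r(u+s)\,e_r(u)$$
at the base point $z-s$. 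Applying the hypothesis on $m$ to $C_{s,t}$ and using that $\|z-s\|^2 - \|z\|^2$ is pluriharmonic in $z$, one sees that for each fixed $(s,t)$ the function
$$u_{s,t}(z) := \log Gm(C_{s,t}(z-s)) - \varepsilon\|z\|^2$$
is plurisubharmonic in $z$. Hence
$$\log Gm_\delta(A(z)) - \varepsilon\|z\|^2 = \log\int e^{u_{s,t}(z)}\rho_\delta(s,t)\,ds\,dt$$
is plurisubharmonic, because the logarithm of an integral of exponentials of plurisubharmonic functions is plurisubharmonic (the continuous version of the log-sum-exp rule). To upgrade convexity to strong convexity I would add $\delta H$ with $H(z,\zeta) = \|\zeta\|^2 e^{\|z\|^2}$ as in Lemma \ref{for sheaf S}; the identity $\log H(A(z)) = \log\|A(z)\|^2 + \|z\|^2$ shows that $\log H(A(z)) - \varepsilon'\|z\|^2$ is plurisubharmonic for small $\varepsilon'$, and the elementary fact that $\log(f_1+f_2)$ is plurisubharmonic whenever each $\log f_i$ is then implies that $Gm_\delta + \delta H$ inherits both defining properties of $S'$. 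The quotient $(Gm_\delta + \delta H)/G$ is then a smooth section of $S'$ that approximates $m$ in the sense of \cite[Definition 1.4]{GreeneWu}.

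The main obstacle is the local approximation step. The subtle point is that the condition defining $S'$ is a \emph{horizontal} plurisubharmonicity, tested only against pull-backs along holomorphic sections rather than on the full total space $E$, so one must show that a convolution carried out in $P(E)$-coordinates respects this horizontal condition. The decisive observation is that inserting a holomorphic section into the convolution formula realizes each integrand as $Gm$ along another holomorphic section $C_{s,t}$, so the hypothesis on $m$ applies pointwise in the convolution parameter $(s,t)$; the log-of-integral-of-exponentials rule then aggregates these plurisubharmonic functions into a single one. A minor technical point, already handled in Lemma \ref{for sheaf S}, is the need to shrink the coordinate neighborhood slightly so that $A(z+s)$ is defined for $(s,t)$ in the support of $\rho_\delta$.
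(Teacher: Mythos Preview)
Your argument is correct, and for maximum closure and $C^\infty$ stability it matches the paper's proof at the same level of detail. The genuine difference is in the local approximation step.

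The paper does \emph{not} preserve condition~(\ref{AA}) directly under convolution. Instead it observes that (\ref{AA}) forces $Gm$ to be plurisubharmonic on the total space (via \cite[Lemma~6.1]{wuwess}), so that the construction from Lemma~\ref{for sheaf S} applies verbatim to produce a smooth, strongly plurisubharmonic, strongly convex $Gm_\delta+\delta H$. Smoothness then lets the paper close the loop through curvature: by Lemma~\ref{neg char} the smooth metric is Kobayashi negative, and by Lemma~\ref{lem inf} Kobayashi negativity gives back condition~(\ref{AA}). Thus the paper's route is
\[
(\ref{AA}) \;\Longrightarrow\; Gm \text{ psh} \;\Longrightarrow\; Gm_\delta+\delta H \text{ strongly psh, smooth} \;\Longrightarrow\; \text{Kobayashi negative} \;\Longrightarrow\; (\ref{AA}).
\]
Your route is a direct preservation: you recognise that substituting a holomorphic section $A$ into the convolution formula exhibits each integrand as $Gm$ evaluated along a translated holomorphic section $C_{s,t}$, apply (\ref{AA}) to $C_{s,t}$ with the same $\varepsilon$ (legitimate since $\varepsilon$ depends only on the neighbourhood, not the section), correct by the pluriharmonic term $\|z-s\|^2-\|z\|^2$, and then use the log-of-integral-of-exponentials rule. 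This avoids the detour through Lemmas~\ref{neg char} and~\ref{lem inf} and shows explicitly that the convolution does not degrade the $\varepsilon$ in (\ref{AA}); the paper's route is shorter to write because it simply points back to Lemma~\ref{for sheaf S} and to existing curvature lemmas. Both approaches need the same $\delta H$ correction to pass from convexity to strong convexity, and your verification that $\log H(A(z))-\varepsilon'\|z\|^2$ is plurisubharmonic (hence $Gm_\delta+\delta H$ still satisfies (\ref{AA}) via log-sum-exp) is the step the paper replaces by the Kobayashi-negativity argument.
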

\begin{proof}
The proof is similar to that of Lemma \ref{for sheaf S}. Let us verify the three properties in Theorem \ref{greenewu} for $S'$. For the maximum closure property, we consider two sections $m_1,m_2$ of $S'$ defined in a neighborhood of some point in $P(E)$. The function $ \max(m_1,m_2) $ is obviously positive and continuous. Also, we have $G\max(m_1,m_2)=\max(Gm_1, Gm_2)$. That $\max(Gm_1, Gm_2)$ is strongly convex is proved exactly the same as in the proof of Lemma \ref{for sheaf S}. Next, given a point in $X$, we can find a coordinate neighborhood and $\varepsilon>0$ such that $\log Gm_1(A(z))-\varepsilon\|z\|^2$ and $\log Gm_2(A(z))-\varepsilon\|z\|^2$ are plurisubharmonic for any local holomorphic section $A$ of $E$. So, $\log\max(Gm_1, Gm_2 )(A(z))-\varepsilon\|z\|^2$ is plurisubharmonic for any $A$. Consequently, $ \max(m_1,m_2) $ is a section of $S'$, and the sheaf $S'$ has the maximum closure property. 

Since the requirements in defining the sheaf $S'$ are open conditions, the sheaf $S'$ has the $C^\infty$ stability property. Indeed, let $U$ be an open set in $P(E)$, $K$ a compact set in $U$, and $m$ a section of $S'$ over $U$. Let $K'$ be the image of $K$ under the projection $P(E)\to X$. Since $K'$ is compact, there exist a finite open cover $\{V_j\}$ for $K'$ and $\varepsilon_{\min}>0$ such that the condition (\ref{AA}) holds in each $V_j$ with $\varepsilon_{\min}$. We can find $\varepsilon'>0$ such that for $g$ smooth in a neighborhood of $K$ with $d_K(0,g)< \varepsilon'$, $$\log(1+\frac{g}{m}(z,[A(z)]))+\frac{\varepsilon_{\min}}{2}\|z\|^2$$ is plurisubharmonic for any holomorphic section $A$ of $E$ defined in $V_j$ for each $j$ ($(z,[A(z)])$ has to be in the domain of $g$). Therefore, $$\log\big(  G(m+g)(A(z))  \big)-\frac{\varepsilon_{\min}}{2}\|z\|^2=\log  Gm(A(z)) -\varepsilon_{\min}\|z\|^2+\log \big(  1+\frac{g}{m}([A(z)]) \big) +\frac{\varepsilon_{\min}}{2}\|z\|^2  $$
is plurisubharmonic for any holomorphic section $A$ of $E$ defined in $V_j$ for each $j$. We have verified the condition (\ref{AA}). The strong convexity is verified the same way as in the proof of Lemma \ref{for sheaf S}.

For the local approximation property, we fix a point $(z_0,[\zeta_0])$ in $P(E)$ and consider a coordinate neighborhood; we want to verify that $S'$ restricted to this coordinate neighborhood has the semilocal approximation property. 

As before, this coordinate neighborhood can be described as follows.  Let $\{e_1,...,e_r\}$ be a holomorphic frame of $E$ around $z_0$ with fiber coordinates $(\zeta_1,...,\zeta_r)$. Around the point $(z_0,[\zeta_0])\in P(E)$, we define the local coordinates $(z_1,\ldots, z_n, w_1,\ldots, w_{r-1})$ by $w_i=\zeta_i/\zeta_r$ for $i=1\sim r-1$. So $$e:=\frac{\zeta_1e_1+\cdots+\zeta_re_r}{\zeta_r}=w_1e_1+\cdots+w_{r-1}e_{r-1}+e_r$$
is a holomorphic frame for $O_{P(E)}(-1)$.

Let $U$ be an open set in this coordinate neighborhood, and $m$ be a section of $S'$ over $U$. The condition (\ref{AA}) implies that $Gm(z,\zeta)$ is plurisubharmonic (\cite[Lemma 6.1]{wuwess}). In the proof of the local approximation for Lemma \ref{for sheaf S}, we only use the fact $Gm$ is plurisubharmonic and strongly convex. As the $Gm$ in the present case is plurisubharmonic and strongly convex, we conclude that the Finsler metric $Gm_\delta+\delta H$, constructed as in the proof of Lemma \ref{for sheaf S}, is strongly plurisubharmonic and strongly convex. Since we have smoothness for the Finsler metric $Gm_\delta+\delta H$, it is Kobayashi negative by Lemma \ref{neg char}, and so it satisfies the condition (\ref{AA}) by Lemma \ref{lem inf}. The function $$\frac{Gm_\delta+\delta H}{G}(z,\zeta)$$     
is therefore a smooth section of the sheaf $S'$, and it approximates $m$ in the sense of the conditions (a) and (b) in \cite[Definition 1.4]{GreeneWu} for $\delta$ small (with $f$ and $ F$ there replaced with $m$ and $ (Gm_\delta+\delta H)/G$ respectively). 

As a result, $S'$ restricted to the coordinate neighborhood has the semilocal approximation property, and hence $S'$ has the local approximation property.   
\end{proof}

If one does not need convexity and only wants to show that a strongly plurisubharmonic continuous Finsler metric can be approximated by the smooth ones, then a much shorter proof can be given based on the argument above. One can also use Richberg's regularization theorem \cite{Richberg} in this case.

\section{Proof of main results}
Let us recall the statements of Theorem \ref{thm 1}:
\begin{enumerate}
    \item\label{11} The quotient bundle of a Kobayashi positive vector bundle is Kobayashi positive.    
    \item\label{12} If $E_1$ and $E_2$ are Kobayashi positive, then $E_1\otimes E_2$ is Kobayashi positive.
    \item If $E$ is Kobayashi positive, then the tensor power $E^{\otimes k}$ and the symmetric power $S^kE$ are Kobayashi positive for $k\geq 1$, and the exterior power $\bigwedge^k E$ is Kobayashi positive for $1\leq k \leq \text{rank } E  $.
\end{enumerate}
\begin{proof}[Proof of Theorem \ref{thm 1}]
\

1. Let $E$ be a Kobayashi positive vector bundle and $E_2$ be a quotient bundle of $E$. By Lemma \ref{convex}, the dual bundle $E^*$ is convex Kobayashi negative. Since $E_2^*$ is a subbundle of $E^*$, the bundle $E^*_2$ is convex Kobayashi negative by Lemma \ref{subbundle}. Using Lemma \ref{sommese}, the bundle $E_2$ is convex Kobayashi positive.

One can give another proof based on \cite[Lemma 2.3]{LLextrapolation} who uses the quotient norm. However, before applying Lempert's lemma, one has to make sure the metrics are convex, and this is justified since Kobayashi positivity implies convexity, Lemma \ref{convex}.

2.
By Lemma \ref{convex}, we assume that $E_1$ carries a convex and strongly pseudoconvex Kobayashi positive Finsler metric $F_1$, and $E_2^*$ carries a convex and strongly pseudoconvex Kobayashi negative Finsler metric $F_2$. We have the operator norm on the bundle $\Hom (E_1,E_2^*)$ defined by 
\begin{equation*}
     F_3(z, A) := \sup_{0\neq \zeta\in E_1|_z} \frac{F_2(z,A\zeta)}{F_1(z,\zeta)} \text{\,\,  for $A\in \Hom (E_1,E_2^*)|_z$ }.
\end{equation*}
Obviously, $F_3$ satisfies positivity and homogeneity in the definition of a Finsler metric. Using the continuity of $F_1$ and $F_2$, it is not hard to verify that $F_3$ is continuous on the total space $\Hom (E_1,E_2^*)$, but we do not know if $F_3$ is smooth away from the zero section. So $F_3$ is only a continuous Finsler metric which is sufficient for our proof later. In addition, the continuous Finsler metric $F_3$ satisfies $F_3(z, A+B )\leq F_3(z,A)+F_3(z,B) $ since the Finsler metric $F_2$ is convex. Later on, if the context is clear, we will simply write
$F_3(z,A)=F_3(A)$. 


Let $H$ be a Hermitian metric on $\Hom (E_1, E_2^*)$. Fix a point in $X$ and a coordinate neighborhood $U$ with coordinates $(z_1,\ldots,z_n)$. Since the Finsler metric $F_2$ is Kobayashi negative, its Kobayashi curvature $\Theta|_{O_{P(E^*_2)}(-1)}$ satisfies
\begin{equation*}
\Theta|_{O_{P(E^*_2)}(-1)} \leq -2\varepsilon \sum_{\alpha} dz_\alpha \wedge d\bar{z}_{\alpha}
\end{equation*}
in $P(E_2^*|_U)$ for some $\varepsilon>0$ by using the local formula (\ref{local for koba}). Together with Lemma \ref{lem inf}, we get 
\begin{equation}\label{2 epsilon}
   \partial\bar{\partial}\log F^2_2(\phi)(v,\bar{v})\geq 2\varepsilon \sum_{\alpha} |v_\alpha|^2, 
\end{equation}
where $\phi$ is any local holomorphic section of $E_2^*$ and $v=\sum v_\alpha \partial/\partial z_\alpha$ is any vector field in $U$. In particular, there exists a positive constant $c_0$ such that for $0\leq c\leq c_0$, we have
\begin{equation}\label{2 epsilon'}
   \partial\bar{\partial}\log \big[F^2_2(A(z)\psi(z))+cH(A(z)) F^2_1(\psi(z))\big]  (v,\bar{v})\geq \varepsilon \sum_{\alpha} |v_\alpha|^2, 
\end{equation}
where $A(z)$ is any local holomorphic section of $\Hom (E_1,E_2^*)$ and  $\psi(z)$ is any local holomorphic section of $E_1$, and $v=\sum v_\alpha \partial/\partial z_\alpha$ is any vector field in $U$.

Now, we fix $c$ with $0\leq c\leq c_0$ and a local holomorphic section $A(z)$ of $\Hom (E_1,E_2^*)$ defined in $U$. From (\ref{2 epsilon'}), for any local holomorphic section $\psi$ of $E_1$, the function 
\begin{equation}\label{apsi}
  \log \big[F^2_2(A\psi) +cH(A )F^2_1(\psi)\big]   -\varepsilon \|z\|^2  
\end{equation}
is plurisubharmonic, where $\|z\|^2=\sum_\alpha|z_\alpha|^2$. Next, we let 
\begin{align*}
    h(z):&=\log \big[F^2_3(A(z))+cH(A(z))\big]-\varepsilon \|z\|^2\\
    &=\log \sup_{0\neq \zeta\in E_1|_z} \frac{\big[F^2_2(A(z) \zeta)+cH(A(z))F_1^2(\zeta)     \big] e^{-\varepsilon \|z\|^2}}{F^2_1(\zeta)}. 
\end{align*}
We are going to show that the function $h(z)$ is plurisubharmonic in $U$. Fix a complex line in $U$. Without loss of generality, we assume the complex line has the direction in $z_1$, namely, $\mathbb{C}\ni \lambda\mapsto p_0+\lambda (1,0,\cdots,0) $ where $p_0$ is a point in $U$. For a point $z_0$ on the complex line, we can find a nonzero vector $ \zeta_0\in E_1|_{z_0}$ such that
\begin{equation}\label{equality}
    h(z_0)=\log  \frac{\big[F^2_2(A(z_0) \zeta_0)+cH(A(z_0))F_1^2(\zeta_0)     \big]e^{-\varepsilon \|z_0\|^2}}{F^2_1(\zeta_0)}. 
\end{equation}

Since the Finsler metric $F_1$ is Kobayashi positive, if we choose $v=v_1\partial/\partial z_1 \in T^{1,0}_{z_0}X$ in Lemma \ref{lem inf}, then
\begin{equation}
0<-\inf \partial\bar{\partial}\log F_1^2(\psi)\bigr|_{z_0}(v,\bar{v}),
\end{equation}
the inf taken over local holomorphic sections $\psi$ of $E_1$ such that $\psi(z_0)=\zeta_0$. We can find such a $\psi$ so that  
\begin{equation}\label{delta}
0>\partial\bar{\partial}\log F_1^2(\psi)\bigr|_{z_0}(v,\bar{v})= \frac{\partial^2\log F_1^2(\psi)}{\partial z_1 \partial \bar{z}_1}\Bigr|_{z_0}|v_1|^2.
\end{equation}
By the definition of $h(z)$, we have 
\begin{equation}\label{3.4}
h(z)\geq \log  \frac{\big[F^2_2(A(z) \psi(z))+cH(A(z))F_1^2(\psi(z))     \big]e^{-\varepsilon \|z\|^2}}{F^2_1(\psi(z))} \text{\,\, for $z$ near $z_0$ in $U$}   
\end{equation}
with equality at $z_0$ by (\ref{equality}). Now, we restrict to the complex line and denote by $\fint_{B(z_0,r)}h(z)$ the average of $h$ over a ball centered at $z_0$ with radius $r$ in the complex line. The inequality (\ref{3.4}) yields 
\begin{align*}
&\big(\fint_{B(z_0,r)} h(z)\big) -h(z_0) \\
\geq& \big(\fint_{B(z_0,r)} \log \big[F^2_2(A\psi) +cH(A )F^2_1(\psi)\big]e^{-\varepsilon \|z\|^2}-\log F^2_1(\psi)\big)\\ &- \big(\log  \big[F^2_2(A(z_0)\zeta_0) +cH(A(z_0) )F^2_1(\zeta_0)\big] e^{-\varepsilon \|z_0\|^2}- \log F^2_1(\zeta_0)\big)\\
\geq&\big(\fint_{B(z_0,r)} -\log F^2_1(\psi)\big) + \log F^2_1(\zeta_0),
\end{align*}
where the second inequality is due to the fact $  \log \big[F^2_2(A\psi) +cH(A )F^2_1(\psi)\big]   -\varepsilon \|z\|^2 $ is plurisubharmonic, namely (\ref{apsi}). As a result, 
\begin{align*}
    &\liminf_{r\to 0} \frac{1}{r^2} \big(\fint_{B(z_0,r)} h(z)-h(z_0)\big)\geq \liminf_{r\to 0} -\frac{1}{r^2}  \big(\fint_{B(z_0,r)} \log F^2_1(\psi) - \log F^2_1(\zeta_0)\big)\\
    =&-\frac{ \partial^2\log F^2_1(\psi)}{\partial z_1 \partial \Bar{z}_1 }\Bigr|_{z_0}>0,
\end{align*}
where we use (\ref{delta}) in the last inequality. Hence $\liminf_{r\to 0} \frac{1}{r^2} \big(\fint_{B(z_0,r)} h(z)-h(z_0)\big)\geq 0$, and $h$ is subharmonic on the complex line (for the last argument, one can see \cite{Saks}, \cite[Lemma 11.2]{CofimanSemmes}, \cite[Lemma 2.3]{LLmax}, or \cite[Lemma 2.2]{LLextrapolation}).

In summary, we have shown that given a point in $X$ and a coordinate neighborhood $U$, there exist $\varepsilon>0$ and a positive constant $c_0$ such that for $0\leq c\leq c_0$ and for any holomorphic section $A(z)$ of $\Hom (E_1, E_2^*)$ defined in $U$, the function $\log [F^2_3(A(z))+cH(A(z))]-\varepsilon \|z\|^2$ is plurisubharmonic.

Since $X$ is compact, we can find $\varepsilon_{\min}>0$ and $c_{\min}>0$ such that for each point in $X$ there exists a coordinate neighborhood so that 
the function $\log (F^2_3+c_{\min}H)(A(z))-\varepsilon_{\min} \|z\|^2$ is plurisubharmonic for any local holomorphic section $A(z)$ of $\Hom (E_1, E_2^*)$ defined in this neighborhood.

As a result, the continuous Finsler metric $(F_3^2+c_{\min} H)$ on the bundle $\Hom (E_1, E_2^*)$ satisfies the condition (\ref{AA}) and is strongly convex. By Lemma \ref{for sheaf S'}, there exists a Finsler metric $F_4$ on $\Hom (E_1, E_2^*)$ that is strongly convex and satisfies the condition (\ref{AA}). By Lemma \ref{lem inf}, the Finsler metric $F_4$ is Kobayashi negative. As a consequence, the bundle $\Hom(E_1, E_2^*)\simeq E_1^*\otimes E_2^*$ is convex Kobayashi negative, hence  $E_1\otimes E_2$ is convex Kobayashi positive by Lemma \ref{sommese}.



In \cite[Corollary 2.5]{LLmax}, Lempert formulated a theorem which is similar to ours. But he assumed that the metric on $E_1$ is Hermitian and the metric on $E_2^*$ is convex, which are stronger than our assumption.

3.
It is an immediate consequence of the statements \ref{11} and \ref{12}.
\end{proof}

Recall the statements of Theorem \ref{thm 2}: \begin{enumerate}
    \item The direct sum of vector bundles $E_1\oplus E_2$ is Kobayashi positive if and only if $E_1$ and $E_2$ are both Kobayashi positive.
    \item In the short exact sequence of vector bundles $0\to E_1\to E \to E_2\to 0$, if $E_1$ and $E_2$ are Kobayashi positive, then $E$ is Kobayashi positive.
    \item If $f:Y\to X$ is an immersion, and $E$ is a Kobayashi positive vector bundle over $X$, then the pull-back $f^*E$ is Kobayashi positive.
\end{enumerate}

\begin{proof}[Proof of Theorem \ref{thm 2}]
\

1. The forward direction follows from the statement \ref{11} in Theorem \ref{thm 1}. For the backward direction, we have two Kobayashi positive vector bundles $E_1$ and $E_2$, hence by Lemma \ref{convex} the duals $E_1^*$ and $E_2^*$ are convex Kobayashi negative, and we denote by $F_1$ and $F_2$ the convex, strongly pseudoconvex, Kobayashi negative Finsler metrics on $E_1^*$ and $E_2^*$ respectively. Let $H_1$ and $H_2$ be two Hermitian metrics on $E_1^*$ and $E_2^*$ respectively. Choose $\varepsilon>0$ small such that the Finsler metrics $F_1^2+\varepsilon H_1$ and $F_2^2+\varepsilon H_2$ are still Kobayashi negative. 

On the direct sum $E^*_1\oplus E^*_2$, we define $F$ by 
\begin{equation}\label{sum}
  F(z,(\xi, \eta))=\big((F_1^2+\varepsilon H_1)(z,\xi)+(F_2^2+\varepsilon H_2)(z,\eta)\big)^{\frac{1}{2}}  
\end{equation}
where $\xi$ and $\eta$ are vectors in $E_1^*|_z$ and $E_2^*|_z$ respectively. It is obvious that $F$ satisfies positivity and homogeneity in the definition of a Finsler metric. However, $F$ is smooth only on $(E^*_1\setminus\{0\})\oplus (E^*_2\setminus\{0\})$ rather than on $E^*_1\oplus E^*_2 \setminus \{0\}$, but $F$ is still continuous on the total space $E^*_1\oplus E^*_2$ since $F_1^2+\varepsilon H_1$ and $F_2^2+\varepsilon H_2$ are continuous on $E^*_1$ and $E^*_2$ respectively. Hence $F$ is only a continuous Finsler metric. By the convexity of $F_1^2+\varepsilon H_1$ and $F_2^2+\varepsilon H_2$, we see that $F^2$ is convex.

By Lemma \ref{neg char}, the Finsler metric $F_1^2+\varepsilon H_1$ is strongly plurisubharmonic on $E_1^*\setminus\{0\}$ and plurisubharmonic on $E_1^*$, and the same is true for $F_2^2+\varepsilon H_2$ on $E_2^*$. Therefore, the continuous Finsler metric $F^2$ is strongly plurisubharmonic on $E^*_1\oplus E^*_2 \setminus \{0\}$.

Let $H$ be a Hermtian metric on $E^*_1\oplus E^*_2$. By choosing $c>0$ small, the continuous Finsler metric $F^2+cH$ is strongly plurisubharmonic and strongly convex. By Lemma \ref{for sheaf S}, there exists a Finsler metric $F_3$ that is strongly plurisubharmonic and strongly convex. Hence $F_3$ is Kobayashi negative by Lemma \ref{neg char}. Therefore, the bundle $E^*_1\oplus E^*_2$ with the Finsler metric $F_3$ is convex Kobayashi negative, so the bundle $E_1\oplus E_2$ is convex Kobayashi positive by Lemma \ref{sommese}.




2. 
This is practically the content of \cite[Lemma 2.2]{Umemura}, but Umemura dealt with Hermitian metrics only. We will follow closely Umemura's argument. We consider the sequence of dual bundles $0\to E_2^*\to E^* \to E_1^*\to 0$. Since $E_1$ and $E_2$ are Kobayashi positive, the duals $E_1^*$ and $E_2^*$ are convex Kobayashi negative. We borrow from the proof of the statement 1 the Finsler metrics $F_1^2+\varepsilon H_1$ and $F_2^2+\varepsilon H_2$, and we denote them by $G_1$ and $G_2$.

Let $E^*$ be determined by $v\in H^1(X, \Hom (E^*_1,E^*_2))$. Take a sufficiently fine open covering $\{U_a\}$ for $X$ so that the extension $E^*$ is given by patching $E^*_2|_{U_a}\oplus E^*_1|_{U_a}$ and $E^*_2|_{U_b}\oplus E^*_1|_{U_b}$ on $U_a \bigcap U_b $ by
\begin{equation*}
\begin{pmatrix}
  I & A_{ba}\\
  0 & I
\end{pmatrix}
\end{equation*}
 where $A_{ba}\in H^0(U_a \bigcap U_b ,\Hom (E^*_1,E^*_2) )$. Then the extension $E^*_\lambda$ determined by $\lambda v\in H^1(X, \Hom (E^*_1,E^*_2))$ with $\lambda$ a nonzero complex number is given by patching through $\big(\begin{smallmatrix}
  I & \lambda A_{ba}\\
  0 & I
\end{smallmatrix}\big)$. Since an extension is differentiably trivial, there exists a smooth homomorphism $B_a: E^*_1|_{U_a} \mapsto E^*_2|_{U_a}$ for each $a$ such that $A_{ba}=B_a-B_b$ on $U_a\bigcap U_b$. In other words, the collection of maps 
\begin{equation*}
\begin{pmatrix}
  I & -\lambda B_{a}\\
  0 & I
\end{pmatrix}: E^*_2|_{U_a}\oplus E^*_1|_{U_a} \mapsto E^*_2|_{U_a}\oplus E^*_1|_{U_a}
\end{equation*}
gives rise to a smooth bundle isomorphism $\Phi$ from $E^*_2\oplus E^*_1$ to $E^*_\lambda$.

On the direct sum $E^*_2\oplus E^*_1$, we define as in (\ref{sum}) a continuous Finsler metric  
$G(z,(\eta,\xi))=G_2(z,\eta)+G_1(z,\xi)$ where $\eta$ and $\xi$ are vectors in $E_2^*|_z$ and $E_1^*|_z$ respectively (we use the convention $G=F^2$). Using the smooth bundle isomorphism $\Phi$, we can define a continuous Finsler metric $\Tilde{G}$ on the bundle $E^*_\lambda$; over $U_a$   
$$\Tilde{G}(z, (\eta, \xi))=G(z, (\eta+\lambda B_a(z)\xi, \xi)). $$
We already know from the proof of the statement 1 that $G$ is convex, so $\Tilde{G}$ is convex. Since $G$ is strongly plurisubharmonic on $E^*_2\oplus E^*_1\setminus \{0\}$, by choosing $\lambda$ small we can make $\Tilde{G}$ strongly plurisubharmonic on $E^*_\lambda \setminus \{0\}$. 

As before, the continuous Finsler metric  $\Tilde{G}+c H$, with $H$ a Hermitian metric on $E^*_\lambda$ and $c>0$ small, is strongly plurisubharmonic and strongly convex. By Lemma \ref{for sheaf S}, there exists a Finsler metric on $E^*_\lambda$ that is strongly plurisubharmonic and strongly convex, and so Kobayashi negative.

Therefore, the bundle $E^*_\lambda$ is convex Kobayashi negative, hence $E_ \lambda\simeq E$ is convex Kobayashi positive by \cite[Lemma 2.1]{Umemura}.

3. We first consider a more general case where the map $f:X\to Y$ is smooth and the bundle $E$ carries a Finsler metric $F$. The pull-back bundle $f^*E$ has the pull-back Finsler metric $f^*F$ defined as  $$f^*F(x,\zeta)=F(f(x),\zeta) \text{, for $\zeta\in E_{f(x)} $.}  $$
It is not hard to check that if $F$ is convex or strongly pseudoconvex, then so is $f^*F$. 

Now, we consider the case in the statement \ref{3}. We assume that $f$ is an immersion and $E$ is Kobayashi positive. By Lemma \ref{convex}, the dual $E^*$ is convex Kobayashi negative, and we let $F^*$ be a convex, strongly pseudoconvex, Kobayashi negative Finsler metric on $E^*$. From the discussion in the previous paragraph, the pull-back Finsler metric $f^*F^*$ on $f^*E^*$ is convex and strongly pseudoconvex. By Lemma \ref{neg char}, the Finsler metric $F^*$ is strongly plurisubharmonic on $E^*-\{0\}$. Since $f$ is an immersion and $f^*F^*(x,\zeta)=F^*(f(x),\zeta) $, a direct computation shows that $f^*F^*$ is strongly plurisubharmonic on $f^*E^*-\{0\}$. By Lemma \ref{neg char} again, the Finsler metric $f^*F^*$ is Kobayashi negative. The bundle $f^*E^*$ is therefore convex Kobayashi negative, and $f^*E$ is convex Kobayashi positive.

Alternatively, one can use formula (\ref{local for koba}) to prove that the pull-back of a Kobayashi positive  Finsler metric is Kobayashi positive without passing to the dual.

\end{proof}

\bibliographystyle{amsalpha}
\bibliography{Dominion}

\textsc{Department of Mathematics, National Tsing Hua University, Hsinchu, Taiwan}

\texttt{\textbf{krwu@math.nthu.edu.tw}}

\end{document}